\theoremstyle{plain}
\newtheorem{lemma}{Lemma}
\newtheorem{The}{Theorem}
\newtheorem{Oldie}{Theorem}
\newtheorem*{corollary}{Corollary}
\newtheorem{Prob}{Problem}
\theoremstyle{remark}
\theoremstyle{definition}
\newcommand{\GL}{\operatorname{GL}}
\newcommand{\SL}{\operatorname{SL}}
\newcommand{\PGL}{\operatorname{PGL}}
\newcommand{\PE}{\operatorname{PE}}
\newcommand{\Sp}{\operatorname{Sp}}
\newcommand{\GU}{\operatorname{GU}}
\newcommand{\map}{\longrightarrow}
\newif\ifcomm
\let\ifcomm\iffalse
\def\a{\alpha}
\def\b{\beta}
\def\g{\gamma}
\def\A{\operatorname{A}}
\def\B{\operatorname{B}}
\def\C{\operatorname{C}}
\def\D{\operatorname{D}}
\def\F{\operatorname{F}}
\def\G{\operatorname{G}}
\def\E{\operatorname{E}}
\def\K{\operatorname{K}}
\def\GF#1{{\mathbb F}_{\!#1}}
\def\rk{\operatorname{rk}}
\def\pamod#1{\,(\operatorname{mod}{\, #1})\,}
\title[Commutators of relative and unrelative
elementary subgroups]{Commutators of relative and unrelative\\ 
elementary subgroups in Chevalley groups}
\author{Nikolai Vavilov}
\address{Department of Mathematics and Computer Science\\
St.~Petersburg State University\\ St.~Petersburg, Russia}
\email{nikolai-vavilov@yandex.ru}
\thanks{The work of the first author is supported by the
Russian Foundation of Fundamental Research, grant 18-31-20044.}
\author{Zuhong Zhang}
\address{Department of  Mathematics\\
 Beijing Institute of Technology\\
 Beijing, China}
\email{zuhong@hotmail.com}
\date{}
\keywords{Chevalley groups, elementary subgroups, generation
of mixed commutator subgroups, standard commutator formula}
\begin{document}

\begin{abstract}
In the present paper, which is a direct sequel of our papers 
\cite{RNZ2, RNZ-generation,NZ1} joint with Roozbeh Hazrat, we achieve
a further dramatic reduction of the generating sets for commutators
of relative elementary subgroups in Chevalley groups.
Namely, let $\Phi$ be a reduced irreducible root system of rank 
$\ge 2$, let $R$ be a commutative ring and let $A,B$ be two ideals 
of $R$. We consider subgroups of the Chevalley group $G(\Phi,R)$
of type $\Phi$ over $R$. The unrelative elementary subgroup 
$E(\Phi,A)$ of level $A$ is generated (as a group) by the elementary 
unipotents $x_{\a}(a)$, $\a\in\Phi$,  $a\in A$, of level $A$. 
Its normal closure in the absolute elementary subgroup $E(\Phi,R)$ 
is denoted by $E(\Phi,R,A)$ and is called the relative elementary subgroup
of level $A$. The main results of \cite{RNZ-generation,NZ1}
consisted in construction of economic generator sets for the mutual
commutator subgroups $[E(\Phi,R,A),E(\Phi,R,B)]$, where $A$ and 
$B$ are two ideals of $R$. It turned out that one can take 
Stein---Tits---Vaserstein generators of $E(\Phi,R,AB)$, plus elementary 
commutators of the form $y_{\a}(a,b)=[x_{\a}(a),x_{-\a}(b)]$, where 
$a\in A$, $b\in B$. Here we improve these results even further, by showing 
that in fact it suffices to engage only elementary commutators 
corresponding to {\it one\/} long root, and that modulo $E(\Phi,R,AB)$
the commutators $y_{\a}(a,b)$ behave as symbols. We discuss also 
some further variations and applications of these results.
\par\smallskip\noindent

\end{abstract}

\maketitle

\maketitle
\hangindent 5.5cm\hangafter=0\noindent
To our distinguished colleague Ivan Panin,\\
a brilliant mathematician, and a wonderful friend
\par\hangindent 5.5cm\hangafter=0\noindent
\bigskip

In the present paper we continue the study of the mutual commutator
subgroups of relative subgroups in Chevalley groups. In the context of 
the general linear group $\GL(n,R)$ such commutator formulas were first 
systematically considered in the groundbreaking work by Hyman Bass
\cite{Bass2}. Soon thereafter, they were expanded to various more 
general contexts by a number of experts including Anthony Bak, 
Michael Stein, Alec Mason, Andrei Suslin, Leonid Vaserstein, Zenon
Borewicz and the first-named author, and many others. One can find
an outline of that stage in the survey \cite{RN}.
\par
The present paper continues the same general line of a long series of 
our joint papers with Roozbeh Hazrat and Alexei Stepanov, 
where we established similar {\it birelative\/} and {\it multirelative\/} 
formulas in various
contexts, see, for instance, \cite{Stepanov_Vavilov_decomposition, 
NVAS, NVAS2, RZ11, RZ12, RNZ1, RNZ3}, etc. For a much broader picture 
of the area and applications
of those resuts we refer the reader to our 
surveys and conference papers \cite{yoga, portocesareo, yoga2, RNZ4}.
\par
More specifically, the present paper is a natural sequel of our joint papers
with Hazrat, Victor Petrov and Stepanov on relative subgroups and 
commutator formulas in Chevalley groups, see
\cite{RN1, HPV, SV10,  RNZ2, RNZ-generation, Stepanov_calculus, 
Stepanov_nonabelian}, compare also the pioneering
early work by Hong You \cite{you92}. There we found, in particular,
economic generating sets for such mutual commutator subgroups
 $[E(\Phi,R,A),E(\Phi,R,B)]$, which were later used by Alexei Stepanov
 in his oustanding results on commutator width \cite{Stepanov_universal}.
\par
In 2018--2019 this line of research got an astounding boost, when we 
noticed that for $\GL(n,R)$ everything works already for the unrelativised 
groups \cite{NV18, NZ2, NZ3, NZ6}.
%% which allowed us to reduce the generating sets for the relative 
%% commutator subgroups. 
In \cite{NZ1} we have partially generalised these results to Chevalley 
groups, by proving that the third type of generators of $[E(\Phi,R,A),E(\Phi,R,B)]$
that occurred in \cite{RNZ-generation} are redundant. Here, we obtain yet
another dramatic improvement, and prove that it suffices to consider the 
elementary commutators $y_{\a}=[x_{\a}(a),x_{-\a}(b)]$ for a single 
{\it long\/} root and, moreover, that the classes of these elementary 
commutators modulo $E(\Phi,R,AB)$ behave as symbols in algebraic $K$-theory.

%%%%%%%%%%%%%%%%%%%%%%%%%%%%%%%%%%%

\section*{Introduction}

Let $\Phi$ be a reduced irreducible root system of rank $\ge 2$,
let $R$ be a commutative ring with 1, and let $G(\Phi,R)$ be a
Chevalley group of type $\Phi$ over $R$. For the background on
Chevalley groups over rings see \cite{NV91} or \cite{vavplot},
where one can find many further references. We fix a split maximal
torus $T(\Phi,R)$ in $G(\Phi,R)$ and consider root unipotents
$x_{\alpha}(\xi)$
%%%, where $\alpha\in\Phi$, $\xi\in R$,
elementary with respect to $T(\Phi,R)$. The subgroup $E(\Phi,R)$
generated by all $x_{\alpha}(\xi)$, where $\alpha\in\Phi$,
$\xi\in R$, is called the {\it absolute\/} elementary subgroup of
$G(\Phi,R)$.
\par
Now, let $I\unlhd R$ be an ideal of $R$. Then the 
{\it unrelativised elementary subgroup\/} $E(\Phi,I)$ of level $I$ is 
defined as the subgroup of $E(\Phi,R)$, generated by all elementary 
root unipotents $x_{\alpha}(\xi)$ of level $I$,
$$ E(\Phi,I)=
{\big\langle x_{\alpha}(\xi)\mid \alpha\in\Phi,\
\xi\in I\big\rangle}. $$
\noindent
In general, this subgroup has no chances to be normal in $E(\Phi,R)$.
Its normal closure $E(\Phi,R,I)=E(\Phi,I)^{E(\Phi,R)}$ is called the
{\it relative elementary subgroup\/} of level $I$.
\par

In the rest of this paper we impose the following umbrella assumption:

(*) In the cases $\Phi=\C_2,\G_2$ assume that $R$ does not have 
residue fields $\GF{2}$ of two elements, and in the case 
$\Phi=\C_l$, $l\ge 2$, assume additionally that any $c\in R$ 
is contained in the ideal $c^2R+2cR$.

This is precisely the condition that arises in the computation of 
the lower level of the mixed commutator subgroup $[E(\Phi,A),E(\Phi,B)]$, 
in \cite{RNZ2}, Lemma 17, 
and \cite{RNZ-generation}, Theorem 3.1, see also further related
results, and discussion of this condition in 
\cite{Stepanov_calculus, Stepanov_nonabelian}. This condition
ensures the inclusion
$$ E(\Phi,R,AB)\le [E(\Phi,A),E(\Phi,B)]. $$
\noindent
Since all vital calculations in the present paper occur modulo
$E(\Phi,R,AB)$, we are not trying to remove or weaken this condition.
In fact, when structure constants of type $\Phi$ are not invertible
in $R$, one should consider in all results more general elementary 
subgroups, corresponding to admissible pairs, rather than individual 
ideals anyway.

Let us state the main result of our previous paper \cite{NZ1},
Theorem 1.2, which, in turn, is an elaboration of the main result of
\cite{RNZ-generation}, Theorem 1.3. Below, $z_{\alpha}(a,c)$ 
are Stein---Tits---Vaserstein generators, whereas $y_{\alpha}(a,b)$
are elementary commutators, both are defined in the statement
itself, see also \S\S~1 and 2 for details.

\begin{Oldie}\label{Oldie:1}
Let $\Phi$ be a reduced irreducible root system of rank $\ge 2$.
Further, let $A$ and $B$ be two ideals of a commutative ring $R$.
Assume {\rm (*)}. 
Then the mixed commutator subgroup
$\big[E(\Phi,R,A),E(\Phi,R,B)\big]$ is generated as a group by
the elements of the form
\par\smallskip
$\bullet$ $z_{\alpha}(ab,c)=x_{-\a}(c)x_{\a}(ab)x_{-\a}(-c)$,
\par\smallskip
$\bullet$ $y_{\alpha}(a,b)=\big[x_{\alpha}(a),x_{-\alpha}(b)\big]$,
\par\smallskip\noindent
where in both cases $\alpha\in\Phi$, $a\in A$, $b\in B$, $c\in R$. 
\end{Oldie}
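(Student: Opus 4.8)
The plan is to prove both inclusions for the subgroup $H=\langle z_{\a}(ab,c),\,y_{\a}(a,b)\rangle$ generated by the asserted elements, over all $\a\in\Phi$, $a\in A$, $b\in B$, $c\in R$; the goal is $H=[E(\Phi,R,A),E(\Phi,R,B)]$. The inclusion $H\le[E(\Phi,R,A),E(\Phi,R,B)]$ is the soft half: each $y_{\a}(a,b)=[x_{\a}(a),x_{-\a}(b)]$ is a commutator of $x_{\a}(a)\in E(\Phi,A)\le E(\Phi,R,A)$ with $x_{-\a}(b)\in E(\Phi,B)\le E(\Phi,R,B)$, hence lies in the mixed commutator subgroup; and the $z_{\a}(ab,c)$ are the Stein--Tits--Vaserstein generators of the relative elementary subgroup, which --- using that $z_{\a}(\,\cdot\,,c)$ is additive in its first argument --- generate $E(\Phi,R,AB)$ as a group, so by {\rm (*)} one gets $E(\Phi,R,AB)\le[E(\Phi,A),E(\Phi,B)]\le[E(\Phi,R,A),E(\Phi,R,B)]$ as recalled above. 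In particular $E(\Phi,R,AB)\le H$.

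For the reverse inclusion I would proceed in two steps: show first that $H$ is normal in $E(\Phi,R)$, and then that it contains all generators of the mixed commutator subgroup. For normality, since the normal subgroup $E(\Phi,R,AB)$ lies inside $H$, it suffices to check that conjugation by an arbitrary elementary generator $x_{\g}(\xi)$ sends the coset $y_{\a}(a,b)\,E(\Phi,R,AB)$ into $H$, i.e.\ that ${}^{x_{\g}(\xi)}y_{\a}(a,b)\in H$ for all $\a,\g\in\Phi$, $\xi\in R$, $a\in A$, $b\in B$. Granting $H\unlhd E(\Phi,R)$, the usual commutator-calculus machinery --- the identities $[uv,w]={}^{u}[v,w][u,w]$ and $[u,vw]=[u,v]\,{}^{v}[u,w]$, the normality of the centraliser of a normal subgroup, the presentation of $E(\Phi,R,A)$ as the normal closure of the $x_{\a}(a)$ and of $E(\Phi,R,B)$ as the group generated by the $z_{\g}(b,c)$ --- reduces the inclusion $[E(\Phi,R,A),E(\Phi,R,B)]\le H$ to the single assertion $[x_{\a}(a),z_{\g}(b,c)]\in H$ for all $\a,\g\in\Phi$, $a\in A$, $b\in B$, $c\in R$; the inductions on word length in the $z_{\g}(b,c)$ and in the conjugated $x_{\a}(a)$ close up precisely because $H$ is normal.

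Both the normality check and the last assertion are computations inside the rank $\le2$ subsystem $\langle\a,\g\rangle$ (which is of type $\A_{1}$, $\A_{1}\times\A_{1}$, $\A_{2}$, $\C_{2}$ or $\G_{2}$). One substitutes $z_{\g}(b,c)=x_{-\g}(c)x_{\g}(b)x_{-\g}(-c)$, expands the commutator, and applies the Chevalley commutator formula together with the rank-one relations in $\langle x_{\g},x_{-\g}\rangle$. The mechanism is that whenever $x_{\a}(a)$ is commuted past a root unipotent whose argument carries a factor from $B$, every monomial produced carries a factor from both $A$ and $B$, hence lies at level $AB$; the $c$-dependent contributions telescope and likewise leave only level-$AB$ terms; so the outcome is a product of root unipotents $x_{\delta}(d)$ with $d\in AB$ --- these lie in $E(\Phi,AB)\le E(\Phi,R,AB)\le H$ --- times a short product of elementary commutators $y_{\delta}(a',b')$ with $\delta\in\Phi$, $a'\in A$, $b'\in B$, which lie in $H$ by definition. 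For ${}^{x_{\g}(\xi)}y_{\a}(a,b)$ the same analysis applies, the antiparallel pair $\{\a,-\a\}$ contributing the only genuinely new terms, again elementary commutators.

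The main obstacle is exactly this rank-two bookkeeping: it is an unavoidable case analysis over the relative positions of $\a$ and $\g$, and it is here that assumption {\rm (*)} enters --- both to secure $E(\Phi,R,AB)\le[E(\Phi,A),E(\Phi,B)]$ and to force the awkward structure-constant terms in types $\C_{2}$ and $\G_{2}$ to vanish modulo $E(\Phi,R,AB)$. An alternative organisation takes as input the three-family generating set of \cite{RNZ-generation}, Theorem~1.3, and only proves that the third family --- the commutators $[x_{\a}(a),z_{-\a}(b,c)]$ --- already lies in $H$ (this is the route of \cite{NZ1}); it isolates, but does not remove, the same rank-two computation, carried out there modulo $E(\Phi,R,AB)$.
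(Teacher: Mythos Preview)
The paper does not prove Theorem~A here; it is quoted from \cite{NZ1}, and the argument there is precisely the ``alternative organisation'' you mention at the end: take the three--family generating set of \cite{RNZ-generation}, Theorem~1.3, and use the Main Lemma of \cite{NZ1} to express the third family $[x_{\a}(a),z_{\a}(b,c)]$ (same root $\a$, not $-\a$ as you wrote) as a product of elementary conjugates of the first two families. Your main proposal is a direct route that avoids quoting \cite{RNZ-generation}: first establish normality of $H$ --- which is exactly Theorem~2 of the present paper, proved independently of Theorem~A --- and then reduce to the single family of commutators $[x_{\a}(a),z_{\g}(b,c)]$. For $\g\neq\pm\a$ the Chevalley commutator formula lands you in $E(\Phi,AB)\le H$; for $\g=-\a$ you get ${}^{x_{\a}(c)}y_{\a}(a,b)\in H$ by normality; and for $\g=\a$ you are back to the Main Lemma of \cite{NZ1}. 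So the two routes converge on the same hard kernel; your organisation merely absorbs the work of \cite{RNZ-generation} into the same rank--two case analysis rather than citing it, at the cost of handling all relative positions of $\a,\g$ instead of only $\g=\a$.

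One correction on the role of {\rm (*)}: it is used solely to secure the inclusion $E(\Phi,R,AB)\le[E(\Phi,A),E(\Phi,B)]$, i.e.\ for the ``soft'' direction $H\le[E(\Phi,R,A),E(\Phi,R,B)]$. The rank--two computations for the hard direction all take place modulo $E(\Phi,R,AB)$ and do not invoke {\rm (*)}; the structure--constant terms in $\C_2$ and $\G_2$ are disposed of because their arguments already lie in $AB$, not because of any residue--field hypothesis. Apart from this and the $z_{\a}$/$z_{-\a}$ slip, your outline is sound.
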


Recall that previous results, including \cite{RNZ-generation}, 
Theorem 1.3, required also a third type of generators for mixed
commutator subgroups, viz.\
$\big[x_{\alpha}(a),z_{\alpha}(b,c)\big]$, but the Main Lemma 
of \cite{NZ1} shows that this type of generators are redundant, 
and can be expressed as product of elementary conjugates of
the generators listed in Theorem A. Since both remaining types of 
generators sit already in $\big[E(\Phi,A),E(\Phi,B)\big]$, the above 
theorem immediately implies the following result, \cite{NZ1}, Theorem 1.1.

\begin{Oldie}\label{Oldie:2}
In conditions of Theorem~\ref{Oldie:1} 
$$ \big[E(\Phi,R,A),E(\Phi,R,B)\big]=\big[E(\Phi,A),E(\Phi,B)\big]. $$  
\end{Oldie}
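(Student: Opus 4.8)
The plan is to derive the asserted equality directly from Theorem~\ref{Oldie:1}, using in addition only the inclusion $E(\Phi,R,AB)\le[E(\Phi,A),E(\Phi,B)]$ that the umbrella assumption~(*) is designed to guarantee.

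First I would dispose of the trivial inclusion. For $a\in A$ one has $x_\alpha(a)\in E(\Phi,A)\le E(\Phi,R,A)$, and likewise $E(\Phi,B)\le E(\Phi,R,B)$; hence $[E(\Phi,A),E(\Phi,B)]\le[E(\Phi,R,A),E(\Phi,R,B)]$.

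For the reverse inclusion I would appeal to Theorem~\ref{Oldie:1}, which exhibits a generating set of $[E(\Phi,R,A),E(\Phi,R,B)]$ consisting of the elements $z_\alpha(ab,c)$ and $y_\alpha(a,b)$, $\alpha\in\Phi$, $a\in A$, $b\in B$, $c\in R$. It then suffices to check that both types of generators already lie in $[E(\Phi,A),E(\Phi,B)]$. The elementary commutator $y_\alpha(a,b)=[x_\alpha(a),x_{-\alpha}(b)]$ is, by its very definition, the commutator of $x_\alpha(a)\in E(\Phi,A)$ with $x_{-\alpha}(b)\in E(\Phi,B)$, so it lies in $[E(\Phi,A),E(\Phi,B)]$ tautologically. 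The Stein--Tits--Vaserstein generator $z_\alpha(ab,c)=x_{-\alpha}(c)x_\alpha(ab)x_{-\alpha}(-c)$ is an $E(\Phi,R)$-conjugate of the root unipotent $x_\alpha(ab)$, and since $ab\in AB$ it belongs to $E(\Phi,R,AB)$; invoking $E(\Phi,R,AB)\le[E(\Phi,A),E(\Phi,B)]$, valid under~(*) as recalled in the introduction (see \cite{RNZ2}, Lemma~17, and \cite{RNZ-generation}, Theorem~3.1), we conclude that $z_\alpha(ab,c)\in[E(\Phi,A),E(\Phi,B)]$.

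Putting the two inclusions together yields the equality. There is no real obstacle here beyond what is already contained in Theorem~\ref{Oldie:1}: the statement is essentially a bookkeeping consequence of that theorem, the only external input being the inclusion $E(\Phi,R,AB)\le[E(\Phi,A),E(\Phi,B)]$, which is precisely why hypothesis~(*) is imposed.
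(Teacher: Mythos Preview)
Your proof is correct and follows exactly the route indicated in the paper: both types of generators furnished by Theorem~\ref{Oldie:1} lie in $[E(\Phi,A),E(\Phi,B)]$, the elementary commutators tautologically and the Stein--Tits--Vaserstein generators via the inclusion $E(\Phi,R,AB)\le[E(\Phi,A),E(\Phi,B)]$ guaranteed by~(*). The paper states this in one line; you have simply written out the details.
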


Here, we obtain further striking improvements of these results.
First of all, it turns out that the set of generators in Theorem~A
can be further reduced by restricting $\a$ for the second type of
generators to a single {\it long\/} root.

\begin{The}\label{The:1}
In conditions of Theorem~\ref{Oldie:1} the mixed commutator 
of elementary 
subgroups $\big[E(\Phi,R,A),E(\Phi,R,B)\big]$ is generated as a 
group by the elements of the form
\par\smallskip
$\bullet$ $z_{\alpha}(ab,c)=x_{-\a}(c)x_{\a}(ab)x_{-\a}(-c)$,
\par\smallskip
$\bullet$ $y_{\beta}(a,b)=\big[x_{\beta}(a),x_{-\beta}(b)\big]$,
\par\smallskip\noindent
where in both cases $a\in A$, $b\in B$, $c\in R$, whereas
$\alpha\in\Phi$ is arbitrary, and $\beta\in\Phi$ is a fixed long root.
\end{The}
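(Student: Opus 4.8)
The plan is to deduce Theorem~\ref{The:1} from Theorem~\ref{Oldie:1}, by showing that for an \emph{arbitrary} $\alpha\in\Phi$ the elementary commutator $y_\alpha(a,b)$ already belongs to the subgroup
$$H=\big\langle\, z_\gamma(ab,c),\ y_\beta(a',b')\ \big|\ \gamma\in\Phi,\ a,a'\in A,\ b,b'\in B,\ c\in R\,\big\rangle,$$
where $\beta$ is the fixed long root. The inclusion $H\le\big[E(\Phi,R,A),E(\Phi,R,B)\big]$ is automatic; and since $x_\gamma(s)x_\gamma(s')=x_\gamma(s+s')$ lets one pass from the products $ab$ to all elements of the ideal $AB$, the Stein---Tits---Vaserstein generators $z_\gamma(ab,c)$ by themselves already generate $E(\Phi,R,AB)$, so that $E(\Phi,R,AB)\le H$ and all the work below may be done in the quotient $\overline C:=\big[E(\Phi,R,A),E(\Phi,R,B)\big]/E(\Phi,R,AB)$, which by Theorem~\ref{Oldie:1} is generated by the classes $\overline y_\gamma(a,b)$, $\gamma\in\Phi$; write $\overline H$ for the image of $H$ in $\overline C$. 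I would also record at the outset that $\overline C$ is \emph{central} in $E(\Phi,R)/E(\Phi,R,AB)$ — equivalently $\big[[E(\Phi,R,A),E(\Phi,R,B)],E(\Phi,R)\big]\le E(\Phi,R,AB)$, a standard consequence of the multiple commutator formulas (and, in any case, indispensable for the symbol description pursued in the sequel). In particular $\overline C$ is abelian and every conjugation acts trivially on it.

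Next comes the elementary symbol calculus in $\overline C$, which is where the ``vital calculations'' sit. Centrality together with $[uu',v]={}^{u}[u',v]\,[u,v]$ gives bi-additivity, $\overline y_\alpha(a+a',b)=\overline y_\alpha(a,b)\,\overline y_\alpha(a',b)$ and $\overline y_\alpha(a,b+b')=\overline y_\alpha(a,b)\,\overline y_\alpha(a,b')$; consequently $\overline y_\alpha(0,b)=\overline y_\alpha(a,0)=1$ and $\overline y_\alpha(-a,b)=\overline y_\alpha(a,b)^{-1}=\overline y_\alpha(a,-b)$, which disposes of all sign bookkeeping. Conjugating by torus elements $h_\gamma(t)$ with $t\in R^{*}$, for which $h_\gamma(t)x_{\pm\delta}(v)h_\gamma(t)^{-1}=x_{\pm\delta}(t^{\pm\langle\delta,\gamma^\vee\rangle}v)$, and using centrality, one derives for every short root $\delta$ (choosing $\gamma$ with $\langle\delta,\gamma^\vee\rangle=\pm1$) the dilation relation $\overline y_\delta(ta,b)=\overline y_\delta(a,tb)$; and the Chevalley commutator formula, applied to a decomposition $\gamma=\delta+\varepsilon$, yields change-of-root identities expressing $\overline y_\gamma(a,b)$ through $\overline y_\delta$ and $\overline y_\varepsilon$ modulo $E(\Phi,R,AB)$. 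All of these are routine, if somewhat lengthy, commutator manipulations once centrality is in hand.

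With this calculus available, long roots are immediate. Since $W(\Phi)$ acts transitively on the set of long roots, for long $\alpha$ pick $w\in W(\Phi)$ with $w\beta=\alpha$ and a lift $n_w\in E(\Phi,\mathbb Z)\le E(\Phi,R)$, a product of the $w_\gamma(1)=x_\gamma(1)x_{-\gamma}(-1)x_\gamma(1)$. As conjugation is trivial on the central subgroup $\overline C$, we get $\overline y_\beta(a,b)=\overline n_w\,\overline y_\beta(a,b)\,\overline n_w^{-1}$, while the Chevalley---Steinberg relations identify the right-hand side with $\overline y_\alpha(\pm a,\pm b)=\overline y_\alpha(a,b)^{\pm1}$ by the sign rules above. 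Hence $\overline y_\alpha(a,b)=\overline y_\beta(a,b)^{\pm1}\in\overline H$ for every long $\alpha$, and in the simply-laced cases $\A_l,\D_l,\E_l$ this already finishes the proof.

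The remaining case of short roots is, as I expect, the real obstacle: here $\alpha$ is not in the Weyl orbit of $\beta$, so no conjugation relates $\overline y_\alpha$ to $\overline y_\beta$ directly, and one must bridge the two root lengths through the Chevalley commutator formula inside the rank-$2$ subsystem spanned by $\alpha$ and a suitable long root — of type $\B_2=\C_2$ in the cases $\B_l,\C_l,\F_4$, and of type $\G_2$ in the case $\G_2$. Concretely, choosing a long root $\delta$ with $\langle\alpha,\delta^\vee\rangle=-1$ one has $\delta=\delta_1+\delta_2$ with a single-term commutator formula $[x_{\delta_1}(s),x_{\delta_2}(u)]=x_\delta(Nsu)$, so that $x_\delta(a)=[x_{\delta_1}(N a),x_{\delta_2}(1)]$ (and similarly for $x_{-\delta}(b)$); substituting these into the \emph{long}-root commutator $y_\delta(a,b)$ and collapsing the resulting iterated commutator by the Hall---Witt identity together with the change-of-root and dilation relations produces, modulo $E(\Phi,R,AB)$, a product of long-root classes times a few \emph{short}-root classes $\overline y_\alpha$, $\overline y_{\alpha'}$ with entries of the form $(ra,b)$ or $(a,rb)$ — the reappearance of short-root terms being exactly the circularity forced by the fact that no short root is a sum of two long roots. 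The dilation relation absorbs the unit factors $r$, bi-additivity absorbs the signs and powers, and, after combining (if necessary) several such bridges, the short-root contributions are solved for, expressing $\overline y_\alpha(a,b)$ as a product of long-root classes, hence as an element of $\overline H$. The bookkeeping here is most delicate precisely in the cases $\C_l$ and $\G_2$, where certain structure constants are not invertible and a factor $2$ intervenes, and this is exactly the point where the umbrella hypothesis {\rm (*)} is used to guarantee that every spurious term falls into $E(\Phi,R,AB)$. Settling the rank-$2$ systems $\B_2$ and $\G_2$ by an explicit computation and transporting the conclusion to higher rank via rank-$2$ subsystems completes the argument.
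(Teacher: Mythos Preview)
Your overall framework matches the paper's: start from Theorem~A, pass to the quotient by $E(\Phi,R,AB)$, use centrality of the elementary commutators there (the paper's Theorem~2), derive bi-additivity (Theorem~3), and then argue that every $\overline y_\alpha(a,b)$ lies in the subgroup generated by the $\overline y_\beta(a',b')$ for the fixed long $\beta$. For long $\alpha$ your Weyl-group argument is a legitimate alternative to the paper's direct $\A_2$ computation (Lemma~4): both yield $\overline y_\alpha=\overline y_\beta^{\pm1}$, and yours is arguably slicker, though it does not give the balancing relation $y_\alpha(ac,b)\equiv y_\beta(a,cb)$ of Theorem~5 for free.

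The short-root case, however, is where the proof has to be earned, and here there is a genuine gap. The paper expands the \emph{short}-root commutator $y_\alpha(ac,b)$ directly: writing $\alpha=\beta+\gamma$ with $\gamma$ long and unfolding the Chevalley commutator formula yields, after Levi-decomposition bookkeeping, the clean identities $\overline y_\alpha(ac,b)=\overline y_\gamma(a,cb)^{\,p}$ with $p=2$ for $\C_2$ (Lemma~7) and $p=3$ for $\G_2$ (Lemma~8); setting $c=1$ gives Theorem~4 and hence Theorem~1. Your plan goes the opposite way --- expand the \emph{long}-root $y_\delta$ by writing $\delta=\delta_1+\delta_2$, obtain a product involving short-root classes, and then ``solve for'' those classes. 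That inversion step is not justified: nothing guarantees that the system of relations you obtain is solvable for the short-root symbols over a general $R$, and in $\C_l$ this is exactly where the paper has to settle for the weaker congruences of Theorem~5. Your torus-dilation relation only holds for units $t\in R^{*}$, which is not enough to move general ring elements across; the paper gets the full balancing for arbitrary $c$ as a by-product of the same Chevalley expansion. Finally, two smaller points: centrality (your ``standard consequence'') is precisely the paper's Theorem~2 and is proved there by an explicit inductive argument, not quoted; and the hypothesis~(*) is not used to kill spurious terms in the rank-2 calculations --- those fall into $E(\Phi,R,AB)$ automatically --- but only to ensure $E(\Phi,R,AB)\le[E(\Phi,A),E(\Phi,B)]$, so that the $z_\alpha(ab,c)$ are admissible generators at all. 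In short: the long-root half is fine and somewhat different from the paper, but the short-root half needs the actual rank-2 computation, carried out in the direct (short-to-long) direction as in Lemmas~5--8.
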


Morally, this theorem is also a partial counterpart of \cite{NZ1},
Theorem 4.1, which asserts that the relative elementary subgroups 
$E(\Phi,R,A)$ are themselves generated by {\it long\/} root type 
unipotents. 

Many of the auxiliary results are important and interesting in themselves, 
and we reproduce some of them in the introduction.
Firstly, it turns out that the elementary commutators are central
in $E(\Phi,R)/E(\Phi,R,AB)$.  The proof of the following result is 
similar to that of the Main Lemma in \cite{NZ1}, and in fact easier.

\begin{The}\label{The:2}
In conditions of Theorem~\ref{Oldie:1} one has
$$ {}^x y_{\a}(a,b)\equiv y_{\a}(a,b) \pamod{E(\Phi,R,AB)}. $$
\noindent
for any  $\a\in\Phi$, all $a\in A$, $b\in B$, and any $x\in E(\Phi,R)$.
\end{The}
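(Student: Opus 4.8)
The plan is to reduce the statement to checking conjugation by the generators $x_\g(\xi)$ of $E(\Phi,R)$ for $\g\in\Phi$, $\xi\in R$, since if ${}^{x}y_\a(a,b)\equiv y_\a(a,b)$ for two elements $x$ it holds for their product (working modulo the normal subgroup $E(\Phi,R,AB)$). So fix a root $\g$ and a scalar $\xi\in R$ and analyse ${}^{x_\g(\xi)}y_\a(a,b)$ according to the position of $\g$ relative to $\pm\a$. The only subtle cases are those in which $\g$ does not commute with at least one of $x_\a(a)$, $x_{-\a}(b)$; when $\g$ is orthogonal to $\a$ (so $\g\perp-\a$ as well) the conjugation is trivial on the nose.

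First I would dispose of the ``parallel'' cases $\g=\pm\a$. Here ${}^{x_{\a}(\xi)}y_\a(a,b)$ and ${}^{x_{-\a}(\xi)}y_\a(a,b)$ lie in the rank-one subgroup generated by $x_{\pm\a}(R)$, and one computes directly, using the commutator identities in $\SL_2$ together with the fact (already available from Theorem~A, or rather its ingredients) that $z_\a(ab,c)\in[E(\Phi,R,A),E(\Phi,R,B)]$ and in particular $x_\a(AB)\subseteq E(\Phi,R,AB)$, that the discrepancy between ${}^{x_{\pm\a}(\xi)}y_\a(a,b)$ and $y_\a(a,b)$ is a product of elements $x_{\pm\a}(\text{something in }AB)$ and of conjugates thereof, hence lies in $E(\Phi,R,AB)$. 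This is the rank-one computation underlying the Main Lemma of \cite{NZ1} and, as the authors remark, it is the easy part.

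The main work is the ``mixed'' case: $\g\neq\pm\a$ but $\langle\g,\a^\vee\rangle\neq 0$. Here I would expand ${}^{x_\g(\xi)}y_\a(a,b)=\big[{}^{x_\g(\xi)}x_\a(a),\,{}^{x_\g(\xi)}x_{-\a}(b)\big]$ using the Chevalley commutator formula: ${}^{x_\g(\xi)}x_\a(a)=x_\a(a)\cdot\prod x_{i\g+j\a}(N_{ij}\,\xi^i a^j)$ with the product over positive integer combinations $i\g+j\a\in\Phi$, and similarly for $x_{-\a}(b)$, the extra factors involving $x_{i\g-j\a}(\pm N_{ij}\xi^i b^j)$. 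Note every ``error'' root $i\g\pm j\a$ carries a coefficient lying in $A$ (resp.\ $B$), since it comes with a factor $a^j$ (resp.\ $b^j$) and $j\ge 1$. One then multiplies out the commutator of the two resulting products; every term that survives is either $y_\a(a,b)$ itself or a commutator/conjugate in which one of the two entries comes from an error factor of level $A$ and the other from an entry (or error factor) of level $B$, hence lies in $[E(\Phi,A),E(\Phi,B)]=[E(\Phi,R,A),E(\Phi,R,B)]$ by Theorem~B; since moreover at least one entry lies in an ideal which is a product of $A$ or $B$ with something, and the relevant roots are distinct from $\pm\a$, a bookkeeping argument over the possible root configurations in rank-$2$ subsystems ($\A_2$, $\B_2=\C_2$, $\G_2$) shows each such term actually lands in $E(\Phi,R,AB)$ — either directly as $x_\delta(AB)$, or via the relative Chevalley formula ${}^{E(\Phi,R)}x_\delta(AB)\subseteq E(\Phi,R,AB)$. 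Collecting these congruences gives ${}^{x_\g(\xi)}y_\a(a,b)\equiv y_\a(a,b)\pamod{E(\Phi,R,AB)}$.

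The hard part will be exactly this last bookkeeping: controlling, in the $\G_2$ and $\C_2$ cases, the error terms where an error root is itself short or where double roots $2\g$, $2\a$ intervene, and checking that no genuinely ``level-$A$-only'' or ``level-$B$-only'' term escapes into the quotient. This is where the umbrella assumption (*) is used, precisely as in \cite{RNZ-generation,RNZ2}, to guarantee that the potentially problematic short-root contributions still lie in $E(\Phi,R,AB)$. Once one knows every mixed term is swallowed by $E(\Phi,R,AB)$, the centrality statement follows, and — as promised in the remark before the statement — the argument is a simplified version of the Main Lemma of \cite{NZ1}, because here we only need to track congruences modulo $E(\Phi,R,AB)$ rather than produce an explicit product of elementary conjugates.
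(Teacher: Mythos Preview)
Your case split is right, but you have the difficulty of the two cases exactly reversed, and the case you call ``easy'' contains a genuine gap.

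\smallskip
\textbf{The case $\gamma\neq\pm\alpha$ is the trivial one.} Once you note (via Lemma~\ref{Lem:6}) that $y_\alpha(a,b)\in G(\Phi,R,AB)\cap L_\alpha=L_\alpha(AB)$, the Levi decomposition (Lemma~3 in the paper) gives
\[
[x_\gamma(\xi),y_\alpha(a,b)]\in [U_\alpha^{\pm}(R),L_\alpha(AB)]\le U_\alpha^{\pm}(AB)\le E(\Phi,AB),
\]
since for $\gamma\neq\pm\alpha$ the element $x_\gamma(\xi)$ lies in $U_\alpha$ or $U_\alpha^-$ (after making $\alpha$ fundamental). No Chevalley-formula bookkeeping is needed; your proposed expansion would essentially re-derive this inclusion by hand.

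\smallskip
\textbf{The case $\gamma=\pm\alpha$ cannot be done by an $\SL_2$ computation.} Your claim that ``one computes directly, using the commutator identities in $\SL_2$, that the discrepancy is a product of $x_{\pm\alpha}(\text{something in }AB)$ and conjugates thereof'' is where the argument breaks. The element $[x_{\pm\alpha}(\xi),y_\alpha(a,b)]$ lies in $L_\alpha(AB)$ (the congruence subgroup of level $AB$ in the rank-one Levi), but there is no $\SL_2$ identity expressing it as a product of $E(\Phi,R)$-conjugates of $x_{\pm\alpha}(AB)$: this is precisely the failure of the standard commutator formula in rank~$1$. Any proof must use $\rk(\Phi)\ge 2$. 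The paper's device is to \emph{rewrite the conjugator}, not the commutator: using the Chevalley commutator formula backwards one expresses $x_{\pm\alpha}(c)$ as a product of root unipotents $x_\delta(\ast)$ with $\delta\neq\pm\alpha$ (e.g.\ in an $\A_2$ subsystem $x_\alpha(c)=[x_\beta(1),x_\gamma(c)]$ with $\alpha=\beta+\gamma$; analogous factorisations in $\C_2$ and $\G_2$ acquire extra factors, still with $\delta\neq\pm\alpha$). Each such factor then falls under the easy case above. Your reading of the remark before the theorem is also off: the authors say the \emph{whole} proof is easier than the Main Lemma of \cite{NZ1}, not that the parallel case is a rank-one triviality; that Main Lemma likewise works by leaving the $\SL_2$ and exploiting the ambient rank.
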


This theorem asserts that
$$ \big[[E(\Phi,A),E(\Phi,B)],E(\Phi,R)\big]\le E(\Phi,R,AB). $$
\noindent
In particular, the quotient $[E(\Phi,A),E(\Phi,B)]/E(\Phi,R,AB)$
is itself abelian, so that we get the following result.

\begin{The}\label{The:3}
In conditions of Theorem~\ref{Oldie:1} for all
$a,a_1,a_2\in A$, $b,b_1,b_2\in B$ one has
\par\smallskip
$\bullet$ 
$y_{\a}(a_1+a_2,b)\equiv  y_{\a}(a_1,b)\cdot y_{\a}(a_2,b) 
\pamod{E(\Phi,R,AB)}$,
\par\smallskip
$\bullet$ 
$y_{\a}(a,b_1+b_2)\equiv  y_{\a}(a,b_1)\cdot y_{\a}(a,b_2) 
\pamod{E(\Phi,R,AB)}$,
\par\smallskip
$\bullet$ 
$y_{\a}(a,b)^{-1}\equiv  y_{\a}(-a,b)\equiv y_{\a}(a,-b) 
\pamod{E(\Phi,R,AB)}$,
\par\smallskip
$\bullet$ 
$y_{\a}(ab_1,b_2)\equiv y_{\a}(a_1,a_2b)\equiv e
\pamod{E(\Phi,R,AB)}$.
\end{The}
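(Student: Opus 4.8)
The plan is to derive all four bullet points as formal consequences of Theorem~\ref{The:2}, which tells us that modulo $N:=E(\Phi,R,AB)$ the elementary commutators $y_{\a}(a,b)$ are central in $E(\Phi,R)/N$, together with the purely group-theoretic commutator identities (Hall--Witt and the basic expansions of $[xy,z]$ and $[x,yz]$). First I would set up the following reduction: work throughout in $\bar{G}=E(\Phi,R)/N$, write $\bar{x}$ for the image of $x$, and note that by Theorem~\ref{The:2} every $\overline{y_{\a}(a,b)}$ lies in the centre of $\bar{G}$. Since $x_{\a}(a_1+a_2)=x_{\a}(a_1)x_{\a}(a_2)$ (additivity of the root homomorphism), the first bullet is the statement that $[x_{\a}(a_1)x_{\a}(a_2),x_{-\a}(b)]\equiv[x_{\a}(a_1),x_{-\a}(b)]\cdot[x_{\a}(a_2),x_{-\a}(b)]\pmod N$. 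Expanding the left side by the standard identity $[uv,w]=\,^{u}[v,w]\cdot[u,w]$ gives $\,^{x_{\a}(a_1)}y_{\a}(a_2,b)\cdot y_{\a}(a_1,b)$, and the conjugation is trivial modulo $N$ by Theorem~\ref{The:2}; this yields the first bullet. The second bullet is entirely symmetric, using $[u,vw]=[u,v]\cdot\,^{v}[u,w]$ and $x_{-\a}(b_1+b_2)=x_{-\a}(b_1)x_{-\a}(b_2)$.

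For the third bullet, the identity $y_{\a}(0,b)=[x_{\a}(0),x_{-\a}(b)]=1$ together with the just-proved additivity in the first argument gives $y_{\a}(a,b)\cdot y_{\a}(-a,b)\equiv y_{\a}(0,b)=e\pmod N$, hence $y_{\a}(a,b)^{-1}\equiv y_{\a}(-a,b)$; the congruence $y_{\a}(a,b)^{-1}\equiv y_{\a}(a,-b)$ follows the same way from additivity in the second argument, or alternatively from the elementary identity $[u,v]^{-1}=\,^{v}[v^{-1},u^{-1}]$-type manipulation, but the additivity route is cleanest. For the fourth bullet one must show $y_{\a}(ab_1,b_2)\equiv e$ when $a\in A$, $b_1,b_2\in B$ (so that $ab_1\in AB$), and symmetrically $y_{\a}(a_1,a_2 b)\equiv e$ when $a_1\in A$, $a_2\in A$, $b\in B$ (note $a_2 b\in AB$; here the first slot argument $a_1$ lies in $A$). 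The point is that if one of the two arguments already lies in the product ideal $AB$, then the whole elementary commutator lies in $E(\Phi,R,AB)=N$: indeed $y_{\a}(c,d)=[x_{\a}(c),x_{-\a}(d)]$ with $c\in AB$ is a product of conjugates of $x_{\pm\a}(\pm c)$ type elements of level $AB$, all of which lie in $E(\Phi,R,AB)$ by definition of the relative elementary subgroup; alternatively $[x_{\a}(c),x_{-\a}(d)]\in[E(\Phi,AB),E(\Phi,R)]\le E(\Phi,R,AB)$. I would spell this out as a one-line lemma: $y_{\a}(c,d)\in E(\Phi,R,AB)$ whenever $cd\in AB\cdot R=AB$, which covers both cases of the fourth bullet.

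The only genuine subtlety — and the place where I expect to spend the most care rather than encounter a real obstacle — is bookkeeping with the non-abelian nature of the ambient group before passing to the quotient: the expansions $[uv,w]=\,^{u}[v,w][u,w]$ produce conjugated elementary commutators, and it is Theorem~\ref{The:2} alone that collapses these conjugates; so I must be scrupulous that every stray conjugation I introduce is by an element of $E(\Phi,R)$ (it is, since $x_{\pm\a}(\cdot)\in E(\Phi,R)$) and acts on a genuine elementary commutator $y_{\a}(\cdot,\cdot)$ (it does). No rank or type restrictions enter, and the long-root hypothesis of Theorem~\ref{The:1} plays no role here; $\a\in\Phi$ is arbitrary throughout. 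Once the centrality input is in hand, the proof is a short formal computation, and I would present it as four displayed congruences with the commutator identity invoked at each step, keeping everything inside $\bar{G}=E(\Phi,R)/E(\Phi,R,AB)$ to avoid repeating "$\pmod{E(\Phi,R,AB)}$" on every line.
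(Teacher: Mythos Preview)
Your approach is essentially identical to the paper's: expand $y_{\a}(a_1+a_2,b)=[x_{\a}(a_1)x_{\a}(a_2),x_{-\a}(b)]$ via $[uv,w]={}^{u}[v,w]\cdot[u,w]$, invoke Theorem~\ref{The:2} to kill the conjugation, argue the second bullet symmetrically, deduce the third from the first two, and observe the fourth is immediate because one of the arguments already lies in $AB$. The paper does exactly this, only more tersely.

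One small slip worth correcting: your proposed ``one-line lemma'' that $y_{\a}(c,d)\in E(\Phi,R,AB)$ whenever $cd\in AB$ is \emph{false} as stated---for generic $c\in A$, $d\in B$ one has $cd\in AB$, yet $y_{\a}(c,d)$ need not lie in $E(\Phi,R,AB)$ (these are precisely the nontrivial generators of the quotient you are studying). The correct hypothesis, which is what you actually use in the preceding sentences, is that one of $c,d$ lies in $AB$; then $y_{\a}(c,d)\in[E(\Phi,AB),E(\Phi,R)]\le E(\Phi,R,AB)$. State the lemma that way and the argument is complete.
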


The following two results afford the advance from Theorem A
to Theorem 1. Their proofs are exactly the main novelty of
the present paper, the rest are either easy variations of the
preceding results, or easily follows.

\begin{The}\label{The:4}
In conditions of Theorem~\ref{Oldie:1} for all $a\in A$, $b\in B$, 
$c\in R$, one has\/{\rm:}
\par\smallskip
$\bullet$ $y_{\a}(a,b)\equiv y_{\b}(a,b)\pamod{E(\Phi,R,AB)}$,
\par\smallskip\noindent
for any roots $\a,\b\in\Phi$ of the same length. 
\par\smallskip
$\bullet$ $y_{\a}(a,b)\equiv{y_{\b}(a,b)}^p\pamod{E(\Phi,R,AB)}$,
\par\smallskip\noindent
if the root $\a\in\Phi$ is short, whereas the long $\b\in\Phi$ is long,
while $p=2$ for $\Phi=\B_l,\C_l,\F_4$, and $p=3$ for $\Phi=\G_2$.
\end{The}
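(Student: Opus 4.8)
The plan is to reduce everything to rank-two considerations inside appropriate root subsystems and then to explicit computation in $\SL_2$ and $\Sp_4$ (and $\G_2$), working throughout modulo $N:=E(\Phi,R,AB)$, where by Theorems~\ref{The:2} and~\ref{The:3} the classes $y_\gamma(a,b)$ are central and biadditive. First I would fix a long root $\b$ and, given any other root $\a$ of the same length, connect $\a$ to $\b$ by a chain $\a=\gamma_0,\gamma_1,\dots,\gamma_k=\b$ in which consecutive roots $\gamma_i,\gamma_{i+1}$ lie in a common rank-two subsystem and have the same length; since the Weyl group acts transitively on roots of a given length and is generated by reflections, such a chain exists (for the simply-laced case every root is ``long'', and for $\B_l,\C_l,\F_4$ the long roots form a subsystem of type $\D_k$ or $\A_1^k$ inside which any two are $\A_2$- or $\A_1\times\A_1$-related, while the short roots are handled similarly). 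Thus it suffices to prove the two displayed congruences when $\a$ and $\b$ generate a rank-two subsystem $\Psi$, i.e.\ when $\Psi\in\{\A_2,\B_2,\G_2\}$.

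The core computation is then the following: express $y_\a(a,b)=[x_\a(a),x_{-\a}(b)]$ by conjugating $x_{\pm\b}$ by a suitable torus or Weyl-group element and by commuting it past root elements of $\Psi\setminus\{\pm\a,\pm\b\}$, using the Chevalley commutator formula. Concretely, in type $\A_2$ with simple roots so that $\a,\b$ and $\a+\b$ are the positive roots, one has $w_{\gamma}x_\a(\xi)w_{\gamma}^{-1}=x_{w_\gamma\a}(\pm\xi)$ for an appropriate $\gamma$, and since $w_\gamma\in E(\Phi,R)$, Theorem~\ref{The:2} gives $y_{w_\gamma\a}(\pm a,\pm b)\equiv y_\a(a,b)\pmod N$; combined with the sign normalisations of Theorem~\ref{The:3} this yields $y_\a(a,b)\equiv y_\b(a,b)\pmod N$. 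For the long-versus-short statement in $\B_2=\C_2$ and $\F_4$ the same conjugation idea applies, but now $w_\gamma$ sends a short root to a short root and a long to a long, so to pass from short $\a$ to long $\b$ one must instead use a relation coming from the commutator formula: expanding $[x_{\a}(a),x_{\a'}(b')]$ for two short roots $\a,\a'$ with $\a+\a'$ long produces, after rearrangement modulo $N$ and the biadditivity of Theorem~\ref{The:3}, a term $y_{\a+\a'}(\cdot,\cdot)$ with a structure constant equal to $\pm 1$ or $\pm 2$; tracking these constants gives the factor $p=2$. In $\G_2$ one uses the analogous identity involving the chain $\a,\a+\b,2\a+\b,3\a+\b,3\a+2\b$ of roots, where three short roots sum to a long root with a coefficient accounting for $p=3$.

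The main obstacle I anticipate is the careful bookkeeping of the Chevalley structure constants $N_{\a\beta,ij}$ and of the sign cocycle in the conjugation formulas $w_\a(1)x_\beta(\xi)w_\a(1)^{-1}=x_{s_\a\beta}(\pm\xi)$: one must verify that after all cancellations modulo $N$ the residual contribution is \emph{exactly} $y_\b(a,b)$ (respectively $y_\b(a,b)^p$) with the right exponent and no leftover $y_\gamma$-terms for intermediate roots $\gamma$. This is precisely where the umbrella assumption (*) is used: in $\C_l$ the condition $c\in c^2R+2cR$ guarantees that the ``extra'' short-root commutator terms with coefficient $2$ are absorbed, and the exclusion of residue field $\GF{2}$ in $\C_2,\G_2$ removes the degenerate cases where the relevant structure constant vanishes mod $2$. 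I would organise this as a short lemma handling each of $\A_2$, $\B_2$, $\G_2$ separately, modelled on the Main Lemma of \cite{NZ1} but simpler, since here we only need the conclusion modulo $E(\Phi,R,AB)$ and may freely use centrality and biadditivity of the $y$'s from the outset. Once these rank-two cases are settled, the chain argument of the first paragraph completes the proof.
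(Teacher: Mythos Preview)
Your Weyl-conjugation argument for the first bullet is correct and is genuinely simpler than the paper's route. Since $w_\gamma(1)\in E(\Phi,R)$, Theorem~\ref{The:2} gives ${}^{w_\gamma(1)}y_\alpha(a,b)\equiv y_\alpha(a,b)$, while ${}^{w_\gamma(1)}y_\alpha(a,b)=y_{s_\gamma\alpha}(\eta a,\eta' b)$; the sign issue you flag is harmless because conjugation by $w_\gamma(1)$ restricts to an isomorphism between the rank-one subgroups $\langle X_{\pm\alpha}\rangle$ and $\langle X_{\pm s_\gamma\alpha}\rangle$, and any such isomorphism of $\SL_2$'s forces $\eta=\eta'$, whence Theorem~\ref{The:3} gives $y_{s_\gamma\alpha}(a,b)\equiv y_\alpha(a,b)$. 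The paper takes a different path: it proves the stronger congruence $y_\alpha(ac,b)\equiv y_\beta(a,cb)$ (Theorems~\ref{The:4} and~\ref{The:5} at once) by rewriting $x_\alpha(-ac)$ as a Chevalley commutator and expanding. Your shortcut handles Theorem~\ref{The:4} alone but would not yield Theorem~\ref{The:5}.

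For the second bullet, however, your sketch has a real gap. The commutator $[x_\alpha(a),x_{\alpha'}(b')]$ for two short roots with $\alpha+\alpha'$ long is simply $x_{\alpha+\alpha'}(\pm 2ab')$ (times further unipotents in $\G_2$): it lies in a unipotent radical and contains no opposite-root pair, so no $y$-symbol can be extracted from it by ``rearrangement and biadditivity''. The paper's mechanism is quite different and is the step you are missing. One writes $\alpha=\beta+\gamma$ with $\gamma$ \emph{long}, places the ideal parameter $a$ on the long-root factor so that $x_\alpha(-ac)=u\cdot[x_\beta(\mp c),x_\gamma(\pm a)]$, and then expands $y_\alpha(ac,b)=x_\alpha(ac)\cdot{}^{x_{-\alpha}(b)}x_\alpha(-ac)$. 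The point is that ${}^{x_{-\alpha}(b)}x_\beta(\mp c)$ acquires a factor $x_{-\gamma}(\pm p\,cb)$, because the $(-\alpha)$-string through $\beta$ hits $-\gamma$ with structure constant $\pm p$; after the obvious cancellations one is left with $[x_{-\gamma}(\pm p\,cb),x_\gamma(\pm a)]\equiv y_\gamma(a,cb)^p$. Your proposal never produces such an opposite-root pair, so as written it cannot reach the conclusion.

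Two smaller points. Your remark that hypothesis~(*) is ``precisely'' where the extra $\C_l$ and $\GF{2}$ conditions enter the proof of Theorem~\ref{The:4} is not right: those conditions are used only to secure $E(\Phi,R,AB)\le[E(\Phi,A),E(\Phi,B)]$ (Lemma~\ref{Lem:6}); the congruences of Theorem~\ref{The:4} are computed entirely modulo $E(\Phi,R,AB)$ and nowhere invoke~(*). Also, your chain argument for long roots in $\C_l$ needs to go through short-root reflections (since distinct long roots there are orthogonal), which is fine for the Weyl approach but is not the ``$\A_2$- or $\A_1\times\A_1$-related'' picture you describe.
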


\begin{The}\label{The:5}
In conditions of Theorem~\ref{Oldie:1} for all $a\in A$, $b\in B$, 
$c\in R$, one has\/{\rm:}
\par\smallskip
$\bullet$ $y_{\a}(ac,b)\equiv y_{\a}(a,cb)\pamod{E(\Phi,R,AB)}$,
\par\smallskip\noindent
where either $\Phi\neq\C_l$, or $\a$ is short. 
\par\smallskip
In the exceptional case when $\Phi=\C_l$ and $\a$ is long only 
the following weaker congruences hold\/{\rm:}
\par\smallskip
$\bullet$
$y_{\a}(ac^2,b)\equiv y_{\a}(a,c^2b)\pamod{E(\Phi,R,AB)}$,
\par\smallskip
$\bullet$ 
$y_{\a}(ac,b)^2\equiv y_{\a}(a,cb)^2\pamod{E(\Phi,R,AB)}$.
\end{The}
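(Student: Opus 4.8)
The plan is to establish the principal congruence (the first bullet) first for those roots $\a$ that lie in a subsystem of type $\A_2$ consisting entirely of roots of the same length as $\a$, by a single application of the Hall--Witt identity; then to propagate it to all remaining roots — and hence to all cases with $\Phi\ne\C_l$, as well as to the short roots of $\C_l$ — by invoking Theorem~\ref{The:4}; and finally to treat the long roots of $\C_l$ separately, where the structure constant $\pm2$ in the Chevalley commutator $[x_{e_i-e_j}(s),x_{e_i+e_j}(t)]=x_{2e_i}(\pm2st)$ forces the weaker statements.

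For the first step, write $\a=\g+\delta$ with $\g,\delta,\g+\delta\in\Phi$ all of the same length as $\a$ and $2\g+\delta,\g+2\delta\notin\Phi$; this is possible exactly when $\a$ sits in an $\A_2$-subsystem of equal-length roots, and then $[x_{\g}(a),x_{\delta}(c)]=x_{\a}(\pm ac)$. I would set $p=x_{\g}(a)$ (of level $A$), $q=x_{\delta}(c)$, $r=x_{-\a}(b)$ (of level $B$), and apply the Hall--Witt identity in the form
$$\big[[p,q^{-1}],r\big]^{q}\cdot\big[[q,r^{-1}],p\big]^{r}\cdot\big[[r,p^{-1}],q\big]^{p}=e.$$
Here the first inner commutator is $[[p,q^{-1}],r]=[x_{\a}(\pm ac),x_{-\a}(b)]$, which by Theorem~\ref{The:3} is $\equiv y_{\a}(ac,b)^{\pm1}\pmod{E(\Phi,R,AB)}$; in the second, $[q,r^{-1}]=x_{\delta-\a}(\pm cb)=x_{-\g}(\pm cb)$, so $[[q,r^{-1}],p]\equiv y_{\g}(a,cb)^{\pm1}\pmod{E(\Phi,R,AB)}$; in the third, $[r,p^{-1}]=x_{-\delta}(\pm ab)$ with $ab\in AB$, so $[[r,p^{-1}],q]\in[E(\Phi,AB),E(\Phi,R)]\le E(\Phi,R,AB)$. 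The outer conjugations by $p,q,r\in E(\Phi,R)$ disappear modulo $E(\Phi,R,AB)$ by Theorem~\ref{The:2}, and $E(\Phi,R,AB)\unlhd E(\Phi,R)$ absorbs the third factor. Thus Hall--Witt collapses to $y_{\a}(ac,b)\equiv y_{\g}(a,cb)^{\pm1}$, hence $y_{\a}(ac,b)\equiv y_{\a}(a,cb)^{\pm1}$ by Theorem~\ref{The:4}; and specialising $c=1$ forces the sign to be $+$, since $y_{\a}(a,b)$ is not $2$-torsion modulo $E(\Phi,R,AB)$ in general. This proves the principal congruence for such $\a$.

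For the propagation: if $\Phi$ is simply laced, every root is of this kind. If $\Phi=\B_l$ or $\F_4$ with $l\ge3$, the long roots form a subsystem of type $\D_l$, resp. $\D_4$, and for $\G_2$ the long roots already form an $\A_2$, so the long roots are covered; the short roots then follow by Theorem~\ref{The:4}, since $y_{\a}(ac,b)\equiv y_{\b}(ac,b)^{p}\equiv y_{\b}(a,cb)^{p}\equiv y_{\a}(a,cb)$ for $\a$ short and $\b$ long. If $\Phi=\C_l$ with $l\ge3$, the short roots $\pm e_i\pm e_j$ themselves form a subsystem of type $\D_l$, so the $\A_2$-argument applies to them directly and gives the principal congruence for all short roots of $\C_l$. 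The low-rank systems $\B_2=\C_2$, where no $\A_2$-subsystem exists, are disposed of by a direct computation in the rank-two group along the same lines.

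There remains the case $\Phi=\C_l$, $\a=2e_i$. Running the same Hall--Witt scheme with $\g=e_i-e_j$, $\delta=e_i+e_j$, so that $[x_{\g}(a),x_{\delta}(\mp c)]=x_{2e_i}(\pm2ac)$ — the extra summand $x_{2e_j}(\pm c^{2}b)$ now appearing in $[q,r^{-1}]$, as well as the terms in the third factor, again have all entries in $AB$ and are absorbed — together with the already-proved principal congruence for the short root $e_i-e_j$, with Theorem~\ref{The:3}, and with Theorem~\ref{The:4} ($y_{\text{short}}\equiv y_{\text{long}}^{2}$), yields $y_{2e_i}(ac,b)^{2}\equiv y_{2e_i}(a,cb)^{2}$, the second weak congruence; the factor $2$ is genuinely unavoidable because $2e_i$ is never the value of a Chevalley commutator with structure constant $\pm1$. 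For the first weak congruence $y_{2e_i}(ac^{2},b)\equiv y_{2e_i}(a,c^{2}b)$ I would instead use the ``$c^{2}$-scaling'' $x_{\pm2e_i}(t)\mapsto x_{\pm2e_i}(c^{\pm2}t)$: for invertible $c$ this is conjugation by $h_{e_i-e_j}(c)\in E(\Phi,R)$, which, combined with the centrality of $y_{2e_i}(a,b)$ (Theorem~\ref{The:2}), gives $y_{2e_i}(c^{2}a,b)\equiv y_{2e_i}(a,c^{2}b)$; for general $c$ one implements the same scaling on the relevant root unipotents modulo $E(\Phi,R,AB)$ by a product of elementary commutators, and this is precisely the step where the condition $c\in c^{2}R+2cR$ from {\rm(*)} is needed. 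Carrying this $c^{2}$-scaling through for non-invertible $c$ is the main obstacle; for all other roots the Hall--Witt computation is entirely uniform, the only delicate bookkeeping being the signs, which in any case are forced by the $c=1$ specialisation.
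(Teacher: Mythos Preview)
Your Hall--Witt approach is a genuinely different route from the paper's. The paper proceeds by direct expansion: in each rank-two subsystem it rewrites $x_{\a}(-ac)$ (or $x_{\a}(-ac^2)$) via the Chevalley commutator formula, conjugates by $x_{-\a}(b)$, and simplifies term by term. Your use of the Hall--Witt identity packages the same bookkeeping into a single three-term relation, which is cleaner and makes the role of Theorem~\ref{The:2} (centrality) more transparent. For the principal congruence and for the squared congruence in $\C_l$, your argument is correct and essentially complete; the $\C_2$ short-root case you dismiss in one line does work the same way (take $p=x_{\b}(a)$ short, $q=x_{\g}(c)$ long with $\a=\b+\g$; the extra factor $x_{\a+\b}(\pm a^2c)$ from $[p,q^{-1}]$ contributes only $AB$-level terms when commuted with $x_{-\a}(b)$).

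There is, however, a real gap in your treatment of the first weak congruence $y_{\a}(ac^2,b)\equiv y_{\a}(a,c^2b)$ for $\a$ long in $\C_l$. Your proposed ``$c^2$-scaling via $h_{e_i-e_j}(c)$'' works only for invertible $c$, and the appeal to condition~(*) does not repair it: in the paper (*) is used solely to ensure $E(\Phi,R,AB)\le[E(\Phi,A),E(\Phi,B)]$, never for this particular congruence. The paper handles this case by exactly the same direct-expansion method as the others (its Lemma~6), writing $x_{\a}(-ac^2)=x_{\b+\g}(ac)\cdot[x_{\b}(c),x_{\g}(-a)]$ with $\b$ short, $\g$ long, $\a=2\b+\g$; no hypothesis on $c$ is needed. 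In fact your own Hall--Witt scheme handles it too, once you assign the parameters correctly: take $p=x_{\g}(a)$ with $\g$ \emph{long} and $a\in A$, $q=x_{\b}(c)$ with $\b$ \emph{short}, and $r=x_{-\a}(b)$. Then $[p,q^{-1}]$ contains $x_{\a}(\pm ac^2)$, while $[q,r^{-1}]$ contains $x_{-\g}(\pm c^2b)$; since $-\a+\g=-2\b$ is not a root, the third Hall--Witt factor is trivial, and one reads off $y_{\a}(ac^2,b)\equiv y_{\g}(a,c^2b)$ directly. So the obstacle you flag is illusory---you simply put $a$ in the short root and $c$ in the long root, when the opposite choice is what produces the $c^2$.
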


For $\GL(n,R)$ over an {\it arbitrary\/} associative ring $R$ similar 
results were established  in our recent papers \cite{NZ2, NZ3, NZ6}.
For Bak's unitary groups $\GU(2n,R,\Lambda)$, again over an 
{\it arbitrary\/} form ring $(R,\Lambda)$, such similar results are 
presently under way \cite{NZ4}.
\par
The paper is organised as follows. In \S~1 we recall necessary
notation and background.
In \S~2 we prove Theorem 2, and thus also Theorem 3. The 
technical core of the paper are \S\S~3--5, where we prove 
Theorems 4 and 5, for rank 2 root systems, $\A_2$, $\C_2$ (which
is again {\it by far\/} the most difficult case!)
and $\G_2$, respectively. Together, Theorems 2 and 4 imply 
Theorem 1. Finally, in \S~6 we derive some corollaries 
of Theorem 1 and state some further related problems.

%%%%%%%%%%%%%%%%%%%%%%%%%%%%%%%%%%%%%%%%%%%%%%%%%%%%%%%%%%%%%%%%%%%

\section{Notation and preliminary facts}\label{sec:old}

To make this paper independent of \cite{RNZ2,RNZ-generation, NZ1}, here we 
recall basic notation and the requisite facts, which will be used in 
our proofs. For more background information on Chevalley groups 
over rings, see \cite{NV91,vavplot,RN1} and references therein.

\subsection{Notation}
Let $G$ be a group. For any $x,y\in G$,  ${}^xy=xyx^{-1}$  denotes
the left $x$-conjugate of $y$. As usual, $[x,y]=xyx^{-1}y^{-1}$
denotes the [left normed] commutator of $x$ and $y$. We shall make
constant use of the obvious commutator identities, such as $[x,yz]=[x,y]\cdot{}^y[x,z]$ or $[xy,z]={}^x[y,z]\cdot[x,z]$,
usually without any specific reference.

Let $\Phi$ be a reduced irreducible root system of rank $l=\rk(\Phi)$. 
We denote by $\Phi_s$ the subset $\Phi$ consisting of short roots, 
and by $\Phi_l$ the subsystem of $\Phi$ consisting of long roots.
Fix an order on
$\Phi$ with $\Phi^{+}$, $\Phi^{-}$ and $\Pi =\{\a_{1},\ldots,\a_l\}$
being the sets of {\it positive\/}, {\it negative\/} and
{\it fundamental\/} roots, respectively. Further, let $W=W(\Phi)$
be the Weyl group of $\Phi$.

As in the introduction, we denote by $x_{\a}(\xi)$, $\a\in\Phi$, $\xi\in R$, 
the elementary generators of the (absolute) elementary Chevalley subgroup 
$E(\Phi,R)$. For a root $\a\in\Phi$ we denote by $X_{\a}$ the
corresponding [elementary] root subgroup 
$X_{\a}=\big\{x_{\a}(\xi)\mid\xi\in R\big\}$. Recall that any conjugate
${}^gx_{\a}(\xi)$ of an elementary root unipotent, where $g\in G(\Phi,R)$
is called {\it root element\/} or {\it root unipotent\/}, it is called {\it long\/} 
or {\it short\/}, depending on whether the root $\a$ itself is long or short.

Let, as in the introduction, $I$ be an ideal of $R$. We denote by
$X_{\a}(I)$ the intersection of $X_{\a}$ with the principal congruence 
subgroup $G(\Phi,R,I)$. Clearly, $X_{\a}(I)$ consists of all elementary 
root elements $x_{\a}(\xi)$, $\a\in\Phi$, $\xi\in I$, of level $I$:
$$ X_{\a}(I)=\big\{x_{\a}(\xi)\mid\xi\in I\big\}. $$
\noindent 
By definition, $E(\Phi,I)$ is generated by $X_{\a}(I)$, for all roots 
$\a\in\Phi$. The same subgroups generate $E(\Phi,R,I)$ as a {\it normal\/}
subgroup of the absolute elementary group $E(\Phi,R)$.
Generators of $E(\Phi,R,I)$ {\it as a group\/} are recalled in the next 
subsection.

\subsection{Generation of elementary subgroups}
Apart from Theorem A we shall extensively use the  two following 
generation results. The first one is a classical result by Michael Stein 
\cite{stein2}, Jacques Tits \cite{tits} and Leonid Vaserstein \cite{vaser86}. 

\begin{lemma}
Let $\Phi$ be a reduced irreducible root system of rank $\ge 2$
and let $I$ be an ideal of a commutative ring $R$. Then
as a group\/ $E(\Phi,R,I)$ is generated by the elements of
the form
$$ z_{\alpha}(a,c)=
x_{-\alpha}(c)x_{\alpha}(a)x_{-\alpha}(-c), $$
\noindent
where\/ $a\in I$, $c\in R$, and $\alpha\in\Phi$.
\end{lemma}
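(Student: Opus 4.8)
\medskip

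\noindent\emph{Proof plan.}
Let $H$ be the subgroup of $E(\Phi,R)$ generated by all the elements $z_{\alpha}(a,c)$ with $\alpha\in\Phi$, $a\in I$, $c\in R$; we must show that $H=E(\Phi,R,I)$. One inclusion is immediate: $z_{\alpha}(a,c)={}^{x_{-\alpha}(c)}x_{\alpha}(a)$ is an $E(\Phi,R)$-conjugate of the generator $x_{\alpha}(a)\in E(\Phi,I)$, and by definition $E(\Phi,R,I)$ contains all such conjugates, so $H\le E(\Phi,R,I)$. For the opposite inclusion one first observes that $x_{\alpha}(a)=z_{\alpha}(a,0)\in H$ for $a\in I$, so $H\supseteq E(\Phi,I)$; it therefore suffices to prove that $H\unlhd E(\Phi,R)$, for then $H$ contains the normal closure $E(\Phi,R,I)=E(\Phi,I)^{E(\Phi,R)}$. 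Since $E(\Phi,R)$ is generated by the root subgroups $X_{\beta}$, which are stable under inversion, the normality of $H$ amounts to the single assertion
\[
{}^{x_{\beta}(s)}z_{\alpha}(a,c)\in H\qquad\text{for all }\alpha,\beta\in\Phi,\ a\in I,\ c,s\in R .
\]

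I would establish this by a case analysis on the relative position of $\beta$ and $\alpha$. The case $\beta=-\alpha$ is trivial, as ${}^{x_{-\alpha}(s)}z_{\alpha}(a,c)=z_{\alpha}(a,c+s)$. For $\beta\neq\pm\alpha$ one writes $z_{\alpha}(a,c)={}^{x_{-\alpha}(c)}x_{\alpha}(a)$ and pushes $x_{\beta}(s)$ past $x_{-\alpha}(c)$ using the Chevalley commutator formula; what remains is a conjugate of $x_{\alpha}(a)$ by a word in root elements attached to roots different from $\pm\alpha$, and expanding the resulting commutators by the standard commutator identities yields only root elements $x_{\gamma}(\xi)$ in which $\xi$ is a multiple of $a$, hence lies in $I$. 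Such elements lie in $E(\Phi,I)\subseteq H$; the two facts that make the bookkeeping work are that a root element with argument in $I$ lies in $H$, and that conjugating such an element by a single root element again yields an element of $H$, namely a product of root elements with arguments in $I$ and of generators $z_{\gamma}(\xi,\,\cdot\,)$ with $\xi\in I$. Iterating this along a word calls for an induction, and I would organise it — on word length, refined by a complexity measure of the roots that occur — so that it does not become circular.

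The genuinely hard case, which I expect to be the main obstacle, is $\beta=\alpha$, i.e.\ the evaluation of ${}^{x_{\alpha}(s)}z_{\alpha}(a,c)$. This computation takes place inside the rank $1$ overgroup $\langle X_{\alpha},X_{-\alpha}\rangle$, and a direct $\SL(2,R)$ computation shows that ${}^{x_{\alpha}(s)}z_{\alpha}(a,c)$ is not in general a single element $z_{\alpha}(\,\cdot\,,\,\cdot\,)$, and that the case cannot be settled within $\langle X_{\alpha},X_{-\alpha}\rangle$ alone — this is precisely where the hypothesis $\rk(\Phi)\ge2$ is used in an essential way. My plan is to choose a root $\mu\neq\pm\alpha$ with $\langle\alpha,\mu^{\vee}\rangle>0$, which exists because $\Phi$ is irreducible of rank $\ge2$, and to set $\nu=\alpha-\mu$, so that $\nu\in\Phi$, $\nu\neq\pm\alpha$ and $\alpha=\mu+\nu$. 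Working inside the rank $2$ subsystem spanned by $\mu$ and $\nu$, one re-expresses $x_{\alpha}(a)$, and hence $z_{\alpha}(a,c)$ and ${}^{x_{\alpha}(s)}z_{\alpha}(a,c)$, as a commutator of products of root elements attached to roots of that subsystem different from $\pm\alpha$. Expanding this commutator by the commutator identities one meets only commutators $[x_{\mu'}(\,\cdot\,),x_{\nu'}(\,\cdot\,)]$ in which at least one of the two arguments is a multiple of $a$, hence lies in $I$; by the previous case each of these lies in $H$, and by the same bookkeeping so does ${}^{x_{\alpha}(s)}z_{\alpha}(a,c)$. A final technical subtlety, to be handled exactly as in Stein's original treatment, is that for multiply laced $\Phi$ some structure constants are not invertible, which forces a careful choice of the decomposition $\alpha=\mu+\nu$ (or a passage to admissible pairs) so that the relevant structure constant equals $\pm1$.

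In summary, I expect the real difficulty to lie not in any individual commutator identity, nor in the enumeration of cases, but in two places: the rank $2$ subsystem identity that powers the case $\beta=\alpha$, and the coherent inductive bookkeeping guaranteeing that every auxiliary root element carrying an argument outside $I$ which is produced along the way is ultimately reabsorbed, either into $E(\Phi,I)$ or into one of the generators $z_{\gamma}(\,\cdot\,,\,\cdot\,)$, without the induction turning circular.
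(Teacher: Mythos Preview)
The paper does not prove this lemma. It is stated as a classical result of Stein, Tits and Vaserstein, with citations to \cite{stein2}, \cite{tits}, \cite{vaser86}, and no argument is supplied; it is simply used as background. So there is no ``paper's own proof'' to compare your proposal against.

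That said, your plan is essentially the standard route to this result, and your identification of the structure of the argument is accurate: the inclusion $H\le E(\Phi,R,I)$ is trivial, the reverse inclusion reduces to normality of $H$ in $E(\Phi,R)$, and normality reduces to ${}^{x_\beta(s)}z_\alpha(a,c)\in H$ with the three cases $\beta=-\alpha$ (trivial), $\beta\neq\pm\alpha$ (Chevalley commutator formula), and $\beta=\alpha$ (the genuine obstacle, requiring $\rk(\Phi)\ge 2$ to escape the rank~$1$ subgroup $\langle X_\alpha,X_{-\alpha}\rangle$ via a decomposition $\alpha=\mu+\nu$). This is exactly how the classical proofs are organised.

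Your worry about circularity in the case $\beta\neq\pm\alpha$ is well placed and is the one point where your sketch is not yet a proof. After moving $x_\beta(s)$ past $x_{-\alpha}(c)$ you get ${}^{x_{-\alpha}(c)w}x_\alpha(a)$ with $w$ a word in root elements for roots $\neq\pm\alpha$; expanding ${}^{w}x_\alpha(a)$ produces root elements $x_\gamma(\xi)$ with $\xi\in I$, but then conjugating each of these by $x_{-\alpha}(c)$ lands you back in an instance of the same problem, now with the pair $(-\alpha,\gamma)$ in place of $(\beta,\alpha)$, and nothing prevents $\gamma=\alpha$ from recurring. The published proofs break this loop by a more structured induction --- Tits via the Steinberg relations directly, Vaserstein via a filtration of the unipotent radical by height --- rather than by naive word length. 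You have correctly diagnosed that some such refinement is needed; the details are in the cited references.
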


The following result on levels of mixed commutator subgroups
is \cite{RNZ-generation}, Theorem 4, which in turn is a sharpening
of \cite{RNZ2}, Lemmas 17--19.

\begin{lemma}\label{Lem:6}
Let\/ $\Phi$ be a reduced irreducible root system of rank\/ $\ge 2$ and let $R$ be a commutative ring.
Then for any two ideals\/ $A$ and\/ $B$
of the ring\/ $R$ one has the following inclusion
\begin{align*}
E(\Phi,R,AB)\le\big[E(\Phi,A),E(\Phi,B\big]&\le\big [E(\Phi,R,A),E(\Phi,R,B)\big]\\&\le
\big[G(\Phi,R,A),G(\Phi,R,B)\big]\le G(\Phi,R,AB).
\end{align*}
\end{lemma}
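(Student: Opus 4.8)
The plan is to establish the chain of four inclusions from left to right, noting that the two innermost inclusions are essentially formal and the outermost one on the right is the standard commutator formula, so the real content is the leftmost inclusion $E(\Phi,R,AB)\le\big[E(\Phi,A),E(\Phi,B)\big]$. First I would dispatch the easy inclusions. The inclusion $\big[E(\Phi,A),E(\Phi,B)\big]\le\big[E(\Phi,R,A),E(\Phi,R,B)\big]$ is immediate from the monotonicity of the commutator bracket together with $E(\Phi,I)\le E(\Phi,R,I)$ for $I=A,B$. Likewise $\big[E(\Phi,R,A),E(\Phi,R,B)\big]\le\big[G(\Phi,R,A),G(\Phi,R,B)\big]$ follows from $E(\Phi,R,I)\le G(\Phi,R,I)$. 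The rightmost inclusion $\big[G(\Phi,R,A),G(\Phi,R,B)\big]\le G(\Phi,R,AB)$ is the classical generalized congruence (standard commutator) formula for Chevalley groups, which holds with no restrictions in rank $\ge 2$ and may be cited from the literature; at the level of the principal congruence subgroups it reduces to a matrix-coefficient computation showing that a commutator of a matrix congruent to $1$ mod $A$ with one congruent to $1$ mod $B$ is congruent to $1$ mod $AB$.

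The heart of the matter is showing $E(\Phi,R,AB)\le\big[E(\Phi,A),E(\Phi,B)\big]$, and this is where hypothesis~(*) enters. By Lemma~1 (Stein--Tits--Vaserstein), $E(\Phi,R,AB)$ is generated as a group by the elements $z_{\alpha}(d,c)=x_{-\alpha}(c)x_{\alpha}(d)x_{-\alpha}(-c)$ with $d\in AB$, $c\in R$, $\alpha\in\Phi$. Since $AB$ is generated additively by products $ab$ with $a\in A$, $b\in B$, and since conjugation by a fixed $x_{-\alpha}(c)$ is an automorphism of $E(\Phi,R)$ preserving $\big[E(\Phi,A),E(\Phi,B)\big]$ (as that subgroup is normalized by $E(\Phi,R)$ — indeed it equals $\big[E(\Phi,R,A),E(\Phi,R,B)\big]$ by Theorem~B-type reasoning, or more elementarily because conjugation by $x_{-\alpha}(c)$ sends $E(\Phi,A)$ into $E(\Phi,R,A)$ and one checks the bracket is stable), it suffices to show that each $x_{\alpha}(ab)$ lies in $\big[E(\Phi,A),E(\Phi,B)\big]$ for $a\in A$, $b\in B$. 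The key computational input is the Chevalley commutator formula: for non-proportional roots $\alpha,\beta$ with $\alpha+\beta\in\Phi$ one has $\big[x_{\alpha}(a),x_{\beta}(b)\big]=\prod x_{i\alpha+j\beta}(N_{\alpha\beta ij}\,a^i b^j)$, and in the generic situation the leading term produces $x_{\alpha+\beta}(\pm ab)$ times higher commutators. When the structure constants $N_{\alpha\beta ij}$ of the relevant type are $\pm1$ — which is the case outside the problematic situations in (*) — one extracts $x_{\gamma}(ab)$ for a suitable $\gamma$, and then uses the Weyl group (acting transitively on roots of a given length, and via the nontrivial structure constants between lengths) to reach all root subgroups. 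The exceptional configurations $\Phi=\C_2,\G_2$ over a ring with a residue field $\GF{2}$, and $\Phi=\C_l$ without the divisibility condition $c\in c^2R+2cR$, are exactly where the naive extraction fails because a structure constant is $\pm2$ or because the short-root contribution cannot be separated; hypothesis~(*) is tailored precisely to circumvent these, as worked out in \cite{RNZ2}, Lemmas~17--19, and \cite{RNZ-generation}, Theorem~4, and I would invoke that analysis rather than reprove it.

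The main obstacle, then, is not any single inclusion but the case analysis in the leftmost one for symplectic and $\G_2$ types. I expect the cleanest exposition to quote Lemmas~17--19 of \cite{RNZ2} (or Theorem~4 of \cite{RNZ-generation}) verbatim for the inclusion $E(\Phi,R,AB)\le\big[E(\Phi,A),E(\Phi,B)\big]$ under~(*), and to supply only short arguments for the three remaining inclusions. One subtlety worth flagging: to reduce the generators $z_{\alpha}(ab,c)$ to the $x_{\alpha}(ab)$ I use that $\big[E(\Phi,A),E(\Phi,B)\big]$ is normal in $E(\Phi,R)$; this normality is itself a nontrivial fact (it is the content of the equality in Theorem~B of the present paper), so for the proof of \emph{this} lemma, which logically precedes those results, I would instead argue directly: $z_{\alpha}(ab,c)=x_{-\alpha}(c)\,x_{\alpha}(ab)\,x_{-\alpha}(-c)$, and if $x_{\alpha}(ab)=\prod [u_i,v_i]$ with $u_i\in E(\Phi,A)$, $v_i\in E(\Phi,B)$, then $z_{\alpha}(ab,c)=\prod\big[{}^{x_{-\alpha}(c)}u_i,\,{}^{x_{-\alpha}(c)}v_i\big]$, and here ${}^{x_{-\alpha}(c)}u_i\in E(\Phi,R,A)$, ${}^{x_{-\alpha}(c)}v_i\in E(\Phi,R,B)$ — which only gives membership in $\big[E(\Phi,R,A),E(\Phi,R,B)\big]$, not the smaller group. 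Resolving this cleanly requires either the separate normality result or a more careful choice of the expression for $x_{\alpha}(ab)$ that is manifestly stable under $X_{-\alpha}$-conjugation; this is the point I would want to get exactly right, and it is handled in the cited references by a direct computation that keeps everything inside $\big[E(\Phi,A),E(\Phi,B)\big]$.
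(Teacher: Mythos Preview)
The paper does not prove this lemma at all; it is stated as a preliminary fact and attributed verbatim to \cite{RNZ-generation}, Theorem~4 (itself a sharpening of \cite{RNZ2}, Lemmas~17--19). Your proposal is a reasonable outline of how one would go about reproving that result, and indeed you yourself end up proposing to invoke exactly those references for the substantive leftmost inclusion. So in effect you and the paper arrive at the same place: the content lives in the cited papers and is quoted, not reproduced.

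Your outline is essentially sound, but the circularity you flag is real and worth sharpening. You want to reduce the Stein--Tits--Vaserstein generators $z_\alpha(ab,c)$ to the root unipotents $x_\alpha(ab)$ by invoking normality of $[E(\Phi,A),E(\Phi,B)]$ in $E(\Phi,R)$, then observe that this normality is essentially Theorem~B and hence not yet available. Your fallback --- conjugating an expression for $x_\alpha(ab)$ and landing only in $[E(\Phi,R,A),E(\Phi,R,B)]$ --- does not recover the stated inclusion. In the original references this is not how the argument goes: one produces each $z_\alpha(ab,c)$ (in fact each $x_\alpha(ab)$ and its elementary conjugates) \emph{directly} as a product of commutators of elements of $E(\Phi,A)$ and $E(\Phi,B)$ via explicit Chevalley-commutator computations inside a rank-$2$ subsystem, with condition~(*) entering exactly where a structure constant $\pm 2$ or $\pm 3$ would otherwise obstruct the extraction. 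No appeal to normality of the bracket is made. So your diagnosis of where the difficulty sits is correct, but the resolution is a direct calculation, not a normality argument.
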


\subsection{Structure constants}
All results of the present paper are based on the Steinberg 
relations among the elementary generators, which will be repeatedly 
used without any specific reference. Especially important for us is
the Chevalley commutator formula 
$$ [x_{\a}(a),x_{\beta}(b)]=\prod_{i\a+j\beta\in \Phi}
x_{i\a+j\beta}(N_{\a\beta ij}a^ib^j), $$
\noindent
where $\a\not=-\beta$ and $N_{\a\beta ij}$ are the structure constants
which do not depend on $a$ and $b$. However, for $\Phi=\G_2$
they may depend on the order of the roots in the product on the right
hand side. See \cite{carter,stein2,steinberg,vavplot} for more
details regarding the structure constants $N_{\a\beta ij}$.
\par
In the proof of Theorems 4 and 5 we need somewhat more specific 
information about the structure constants. For $\Phi=\A_2$ and
$\Phi=\C_2$ this is easy, since the corresponding simply connected
Chevalley groups can be identified with $\SL(3,R)$ and $\Sp(4,R)$,
respectively, and we select the usual parametrisation of the elementary
root subgroups therein. %% as, say, in \cite{carter}.
\par
For $\Phi=\C_2$ the most complicated instance of the Chevalley 
commutator formula 
is when  $\a$ and $\b$ are the long and short fundamental roots, 
respectively. We will choose the parametrisation of root subgroups
for which
$$ [x_{\a}(a),x_{\b}(b)]=x_{\a+\b}(ab)x_{\a+2\b}(ab^2). $$
\par
The case of $\Phi=\G_2$ is somewhat more tricky. Let $\a$ and $\b$ 
be the short and long fundamental roots, respectively. Then it is known 
that the parametrisation of the root subgroups can be chosen in such 
a way that
\begin{align*}
&[x_{\a}(a),x_{\b}(b)]=
x_{\a+\b}(ab)x_{2\a+\b}(a^2b)x_{3\a+\b}(a^3b)x_{3\a+2\b}(2a^3b^2)\\
&[x_{\a}(a),x_{\a+\b}(b)]=
x_{2\a+\b}(2ab)x_{3\a+\b}(3a^2b)x_{3\a+2\b}(-3ab^2),\\
&[x_{\a}(a),x_{2\a+\b}(b)] = x_{3\a+\b}(3ab),\\
&[x_{\b}(a),x_{3\a+\b}(b)] = x_{3\a+2\b}(ab),\\
&[x_{\a+\b}(a),x_{2\a+\b}(b)] = x_{3\a+2\b}(-3ab),
\end{align*}
\noindent
these are precisely the signs you get for the {\it positive\/} Chevalley base.
See, for instance \cite{carter, steinberg, vavplot}.
\par
Our initial proof of Theorems 4 and 5 in the case $\Phi=\G_2$ relied
on an explicit knowledge of the structure constants also in some further
instances of the Chevalley commutator formula. Initially, we used a 
{\tt Mathematica} package
{\tt g2.nb} by Alexander Luzgarev, to compute the structure constants.
However, later we noticed that pairs of short roots do not require a
separate analysis. This is precisely the shortcut presented in \S~5
below.

\subsection{Parabolic subgroups} As in \cite{NZ3}
an important part in the proof of Theorems 2, 4 and 5
is played by the Levi decomposition for [elementary] parabolic 
subgroups. Oftentimes, it allows us to discard factors in the unipotent 
radicals, to limit the number of instances, 
where we have to explicitly invoke precise forms of the 
Chevalley commutator formula.

Classical Levi decomposition asserts that any parabolic subgroup $P$ of
$G(\Phi,R)$ can be expressed as the semi-direct product
$P=L_P\rightthreetimes U_P$ of its unipotent radical
$U_P \unlhd P$ and a Levi subgroup $L_P\le P$. However, as in
\cite{RNZ-generation, NZ1} we do not have to recall the general case. 
\par\smallskip
$\bullet$ Since we calculate inside $E(\Phi,R)$, we can limit
ourselves to the {\it elementary\/} parabolic subgroups,
spanned by some root subgroups $X_{\a}$.
\par\smallskip
$\bullet$ Since we can choose the order on $\Phi$ arbitrarily,
we can always assume that $\a$ is fundamental and, thus, limit 
ourselves to {\it standard\/} parabolic subgroups.
\par\smallskip
$\bullet$ Since the proofs of our main results reduces to groups
of rank 2, we could only consider rank 1 parabolic subgroups,
which {\it in this case\/} are maximal parabolic subgroups.
\par\smallskip
Thus, we consider only {\it elementary\/} rank 1 parabolics, which 
are defined as follows. Namely, we fix an order on $\Phi$, and
let $\Phi^+$ and $\Phi^-$ be the corresponding sets of positive 
and negative roots, respectively. Further, let $\Pi=\{\a_1,\ldots,\a_l\}$
be the corresponding fundamental system. For any $r$, $1\le r\le l$, 
and define the $r$-th rank 1 {\it elementary\/} parabolic subgroup as
$$ P_{\a_r}=\langle U,X_{-\a_r}\rangle\le E(\Phi,R). $$
\noindent
Here $U= \prod X_{\a}$, $\a\in\Phi^+$, is the unipotent radical of
the standard Borel subgroup $B$. Then the unipotent radical of $P_{\a_r}$
has the form
$$ U_{\a_r}=\prod X_{\a},\quad \a\in\Phi^+,\  \a\neq\a_r, $$
\noindent
whereas $L_{\a_r}=\langle X_{\a_r},X_{-\a_r}\rangle$ is the [standard] 
Levi subgroup of $P_r$. Clearly, $L_{\a_r}$ is isomorphic to the elementary 
subgroup $E(2,R)$ in $\SL(2,R)$, or to its projectivised version 
$\PE(2,R)$ in $\PGL(2,R)$. In the sequel we usually (but not always!)
abbreviate $P_{\a_r},U_{\a_r},L_{\a_r}$, etc., to $P_r,U_r,L_r$, etc.
\par
Levi decomposition (which in the case of elementary parabolics 
immediately follows from
the Chevalley commutator formula) asserts that the group $P_r$ 
is the semi-direct product $P_r=L_r\rightthreetimes U_r$ of 
$U_r\unlhd P_r$ and $L_r\le P_r$. The most important part is the 
[obvious] claim is that $U_r$ is normal in $P_r$.
\par
Simultaneously with $P_r$ one considers also the opposite parabolic 
subgroup $P_r^-$ defined as
$$ P_r^-=\langle U^-,X_{\a_r}\rangle\le E(\Phi,R). $$
\noindent
Here $U^-= \prod X_{\a}$, $\a\in\Phi^-$, is the unipotent radical of
the Borel subgroup $B^-$ opposite to the standard one. Clearly,
$P_r$ and $P_r^-$ share the common [standard] Levi subgroup $L_r$, 
whereas the unipotent radical $U_r^-$ of $P_r^-$ is opposite to that 
of $P_r$, and has the form
$$ U_r^-=\prod X_{\a},\quad \a\in\Phi^-,\  \a\neq-\a_r. $$
\noindent
Now, Levi decomposition takes the form
$P_r^-=L_r\rightthreetimes U_r^-$ with $U_r^-\unlhd P_r^-$. In other
words, $U_r$ and $U_r^-$  are both normalised by $L_r$. 

Actually, we need a slightly more precise form of this last statement.
Namely, let $I$ be an ideal of $R$. Denote by $L_r(I)$ the principal
congruence subgroup of level $I$ in $L_r$ and by $U_r(I)$ and 
$U_r^-(I)$ the respective intersections of $U_r$ and $U_r^-$ with
$G(\Phi,R,I)$ --- or, what is the same, with $E(\Phi,R,I)$:
$$ U_r(I)=U_r\cap E(\Phi,R,I),\qquad U_r^-(I)=U_r^-\cap E(\Phi,R,I). $$
\noindent
Obviously, $U_r(I),U_r^-(I)\le E(\Phi,I)$ are normalised by $L_r$. 

The following fact is well known, and obvious. 

\begin{lemma}
Let $A$ and $B$ be two ideals of $R$. Then 
$$ [L_r(A),U_r(B)]\le U_r(AB),\qquad [L_r(A),U_r^-(B)]\le U_r^-(AB). $$
\noindent
In particular, both commutators are contained in 
$E(\Phi,AB)\le E(\Phi,R,AB)$.
\end{lemma}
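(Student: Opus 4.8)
The plan is to exploit the Levi decomposition of the rank-$1$ elementary parabolic $P_r = L_r \rightthreetimes U_r$ and its opposite $P_r^- = L_r \rightthreetimes U_r^-$, which is already available from the Chevalley commutator formula as discussed above. First I would observe that $U_r$ is a product of root subgroups $X_\a$ with $\a \in \Phi^+$, $\a \neq \a_r$; writing any root $\a$ occurring in $U_r$ as $\a = m\a_r + \a'$ where $\a'$ is a nonnegative combination of the other fundamental roots (so $m \ge 0$, and if $m = 0$ then $\a' \neq 0$), the elements of $L_r$, which are generated by $x_{\pm\a_r}(\xi)$, act on each $X_\a$ by the Chevalley commutator formula, sending $x_\a(\xi)$ into a product of $x_\beta$'s with $\beta$ again in $\Phi^+ \setminus \{\a_r\}$ — precisely because $U_r$ is normal in $P_r$. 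The content I actually need is the refined bookkeeping of the \emph{levels}: if $u \in U_r(B)$, i.e.\ $u$ is a product of $x_\a(\xi)$ with $\xi \in B$, and $g \in L_r(A)$, then $[g,u]$ should again lie in $U_r$ but with all entries in the product $AB$.

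The key step, then, is to reduce the commutator identity to the generators. By the standard commutator expansions $[xy,z] = {}^x[y,z]\cdot[x,z]$ and $[x,yz] = [x,y]\cdot{}^y[x,z]$, together with the fact that $U_r$ is normal in $P_r$ (so conjugation by $L_r$ and by $U_r$ preserves $U_r$), it suffices to check $[g,u] \in U_r(AB)$ when $g = x_{\e\a_r}(a)$ with $a \in A$, $\e = \pm1$, and $u = x_\a(b)$ with $b \in B$ and $\a \in \Phi^+$, $\a \neq \a_r$. Here the Chevalley commutator formula gives
$$ [x_{\e\a_r}(a), x_\a(b)] = \prod_{i(\e\a_r)+j\a \in \Phi} x_{i\e\a_r + j\a}\big(N_{\e\a_r,\a,i,j}\, a^i b^j\big), $$
where the product runs over $i,j \ge 1$. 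Each factor has its coefficient divisible by $a b \in AB$ (since $i,j \ge 1$ forces $a^i b^j \in AB$), and each root $i\e\a_r + j\a$ appearing still has positive $\a'$-component coming from $j\a$ with $j \ge 1$, hence lies in $\Phi^+ \setminus\{\a_r\}$ and indexes a root subgroup inside $U_r$. Therefore $[x_{\e\a_r}(a), x_\a(b)] \in U_r(AB)$, and the general case follows. The argument for $U_r^-$ is identical, replacing $\Phi^+$ by $\Phi^-$ and using normality of $U_r^-$ in $P_r^-$. Finally, $U_r(AB), U_r^-(AB) \le E(\Phi, AB) \le E(\Phi, R, AB)$ by definition of $E(\Phi, AB)$ and Lemma~\ref{Lem:6}.

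The only point requiring a moment's care — and the closest thing to an obstacle — is the case $\Phi = \G_2$, where the structure constants $N_{\a\beta ij}$ can depend on the order of the factors on the right-hand side; but this does not affect the conclusion, since what matters is only that every coefficient is a $\mathbb{Z}$-multiple of $a^i b^j$ with $i,j \ge 1$ and every root in the product lies in $U_r$ (resp.\ $U_r^-$), both of which are order-independent. A second routine check is that $[L_r(A), L_r(B)] \le L_r(AB)$ is \emph{not} needed here — we only commute $L_r$ against the unipotent radicals — so no subtlety about $\SL_2$ intervenes. Thus the lemma is genuinely elementary, as stated.
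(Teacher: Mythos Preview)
Your reduction to generators on the $U_r(B)$ side is fine, but the parallel reduction on the $L_r(A)$ side is where the argument breaks. The paper defines $L_r(I)$ as the \emph{principal congruence subgroup} of level $I$ in $L_r$, i.e.\ $L_r\cap G(\Phi,R,I)$, not as the subgroup generated by the root unipotents $x_{\pm\a_r}(a)$ with $a\in I$. These two groups do not coincide in general (already for $L_r\cong E(2,R)$ over most rings), so writing an arbitrary $g\in L_r(A)$ as a product of $x_{\e\a_r}(a)$ with $a\in A$ is not available to you. And you cannot repair this by writing $g$ as a product of $x_{\e\a_r}(\xi)$ with $\xi\in R$: then each Chevalley commutator $[x_{\e\a_r}(\xi),x_\a(b)]$ lands only in $U_r(B)$, not in $U_r(AB)$. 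Concretely, the very element the paper applies this lemma to, namely $y_\a(a,b)=x_\a(a)x_{-\a}(b)x_\a(-a)x_{-\a}(-b)$, lies in $L_r(AB)$ as a congruence element but is manifestly \emph{not} a product of $x_{\pm\a}(\xi)$ with $\xi\in AB$.

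The clean fix --- which is presumably why the paper calls the lemma ``obvious'' and gives no argument --- is to bypass generators of $L_r(A)$ entirely. Since $L_r$ normalises $U_r$ one has $[L_r(A),U_r(B)]\le U_r$, while the last inclusion of Lemma~\ref{Lem:6} gives $[L_r(A),U_r(B)]\le[G(\Phi,R,A),G(\Phi,R,B)]\le G(\Phi,R,AB)$; intersecting,
\[
[L_r(A),U_r(B)]\le U_r\cap G(\Phi,R,AB)=U_r(AB),
\]
and likewise for $U_r^-$. Your Chevalley-commutator computation is correct and does prove the statement with $L_r(A)$ replaced by the unrelativised elementary subgroup $\langle x_{\pm\a_r}(a):a\in A\rangle$, but that is a strictly weaker claim than what is stated.
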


%%%%%%%%%%%%%%%%%%%%%%%%%%%%%%%%%%

\section{Proof of Theorems 2 and 3}\label{sec:proof2}

%% \section{Elementary commutators modulo $E(\Phi,R,AB)$}

This section is devoted to the proof of Theorem 2. It is a 
calculation of the same type as the proof of the Main Lemma
in \cite{RN1}, and actually easier than that, since now we can 
expand the exponent level, rather than the ground level, so
that there are no protruding factors that have to be taken care of
and the elementary commutators do not procreate.

\subsection{Idea of the proof}
Consider the elementary conjugate ${}^xy_{\a}(a,b)$. We argue by induction on the length of $x\in E(\Phi,R)$ in elementary generators. 
Let $x=yx_{\b}(c)$, where $y\in E(\Phi,R)$ is shorter than $x$, 
whereas $\b\in\Phi$, $c\in R$. First of all, recall that under the action 
of the Weyl group $W(\Phi)$ the root $\a$ is conjugate to a fundamental
root of the same length. Thus, we could from the very start choose an ordering of $\Phi$ such that $\a=\a_r\in\Pi$ is a fundamental root for
some $r$, $1\le r\le l$.
\par\smallskip

%%%\ding{108}
If $\b\neq\pm\a$, then $x_{\b}(c)$ belongs either 
to $U_r$, or to $U_r^-$. By Lemma 3 in each case 
$[x_{\b}(c),y_{\a}(a,b)]\in E(\Phi,R,AB)$ and thus
$$ {}^{x_{\b}(c)}y_{\a}(a,b)=
[x_{\b}(c),y_{\a}(a,b)]\cdot y_{\a}(a,b)
\equiv y_{\a}(a,b)\pamod{E(\Phi,R,AB)}. $$
\par\smallskip

\subsection{Expansion of the exponent}
It remains only to consider the case, where 
$\b=\pm\a$. In each case we will express $x_{\b}(c)$ as
a product of root elements satisfying the conditions of the
previous item. One of the four following possibilities may 
occure. Since we are only looking at {\it one\/} instance
of the Chevalley commutator formula at a time, the 
parametrisation of the corresponding root subgroups can 
be chosen in such a way that all the resulting structure 
constants are positive (see \cite{carter, steinberg} or
 \cite{vavplot} and references there.
\par\smallskip
$\bullet$ First, assume that $\a$ can be embedded into 
a subsystem of type $\A_2$. This already proves Theorem~1 
for simply laced Chevalley groups, and for the Chevalley
group of type $\F_4$. It also proves necessary congruences 
for a {\it short\/} root $\a$ in Chevalley groups of type $\C_l$, 
$l\ge 3$, for a {\it long\/} root $\a$ in Chevalley groups of 
type $\B_l$, $l\ge 3$, and for a {\it long\/} root $\a$ in the 
Chevalley group of type $\G_2$.
\par
In this case there exist roots $\g,\delta\in\Phi$, of the 
same length as $\a$ such that $\b=\g+\delta$ and $N_{\g\delta11}=1$. Express $x_{\b}(c)$ in the form 
$x_{\b}(c)=[x_{\g}(1),x_{\delta}(c)]$
and plug this expression in the exponent. We get
$$ {}^{x_{\b}(c)}y_{\a}(a,b)=
%%% {}^{[x_{\g}(1),x_{\delta}(c)]}y_{\a}(a,b)=
{}^{x_{\g}(1)x_{\delta}(c)x_{\g}(-1)x_{\delta}(-c)}y_{\a}(a,b)
\equiv y_{\a}(a,b)\pamod{E(\Phi,R,AB)},  $$
\noindent
by the first item in the proof.
\par\smallskip
$\bullet$ Next, assume that $\a$ can be embedded into a
subsystem of type $\C_2$ as a {\it short\/} root. In this case
we express $\b$ as $\b=\g+\delta$, where $\g$ is long and 
$\delta$ is short. By the above we may $x_{\b}(c)$ in the form 
$$ x_{\b}(c)=[x_{\g}(c),x_{\delta}(1)]\cdot x_{\g+2\delta}(-c). $$
\noindent
Plugging this expression in the exponent, we get 
$$ {}^{x_{\b}(c)}y_{\a}(a,b)=
%%% {}^{[x_{\g}(1),x_{\delta}(c)]x_{\g+2\delta}(-c)}y_{\a}(a,b)=
{}^{x_{\g}(1)x_{\delta}(c)x_{\g}(-1)x_{\delta}(-c)
x_{\g+2\delta}(-c)}y_{\a}(a,b)
\equiv y_{\a}(a,b)\pamod{E(\Phi,R,AB)},  $$
\noindent
where again $\g,\delta,\g+2\delta\neq\pm\a$, so that we
can invoke the first item.
\par\smallskip
$\bullet$  Next, assume that $\a$ can be embedded into a
subsystem of type $\C_2$ as a {\it long\/} root. In this case
we express $\b$ as $\b=\g+2\delta$, with the same $\g,\delta$ 
as above. so that the formula takes the form
$$ x_{\b}(c)=[x_{\g}(c),x_{\delta}(1)]\cdot x_{\g+\delta}(-c). $$
\noindent
Plugging this expression in the exponent, we get 
$$ {}^{x_{\b}(c)}y_{\a}(a,b)=
%%% {}^{[x_{\g}(1),x_{\delta}(c)]x_{\g+2\delta}(-c)}y_{\a}(a,b)=
{}^{x_{\g}(1)x_{\delta}(c)x_{\g}(-1)x_{\delta}(-c)
x_{\g+\delta}(-c)}y_{\a}(a,b)
\equiv y_{\a}(a,b)\pamod{E(\Phi,R,AB)},  $$
\noindent
where again $\g,\delta,\g+\delta\neq\pm\a$.
\par\smallskip
$\bullet$ Finally, when $\a$ is a {\it short\/} root in $\G_2$,
$\b$ as $\b=\g+\delta$, where $\g$ is long and 
$\delta$ is short. By the above, we can rewrite the Chevalley
commutator formula in the form
$$ x_{\b}(c)=[x_{\g}(c),x_{\delta}(1)]\cdot
x_{\g+2\delta}(-c)x_{\g+3\delta}(-c)x_{2\g+3\delta}(-2c^2). $$
\noindent
Plugging this expression in the exponent, we get 
\begin{multline*}
 {}^{x_{\b}(c)}y_{\a}(a,b)=
%%% {}^{[x_{\g}(1),x_{\delta}(c)]x_{\g+2\delta}(-c)}y_{\a}(a,b)=
{}^{x_{\g}(1)x_{\delta}(c)x_{\g}(-1)x_{\delta}(-c)
x_{\g+2\delta}(-c)x_{\g+3\delta}(-c)x_{2\g+3\delta}(-2c^2)}
y_{\a}(a,b)\equiv\\
 y_{\a}(a,b)\pamod{E(\Phi,R,AB)},  
\end{multline*}
\noindent
where again $\g,\delta,\g+2\delta,\g+3\delta,2\g+3\delta\neq\pm\a$.

\subsection{Proof of Theorems 2 and 3}
Summarising the above, we see that for all elementary generators $x_{\b}(c)$ one has 
${}^{x_{\b}(c)}y_{\a}(a,b)\equiv
 y_{\a}(a,b)\pamod{E(\Phi,R,AB)}$ and thus
$$ {}^xy_{\a}(a,b)\equiv {}^yy_{\a}(a,b) \pamod{E(\Phi,R,A\B)}, $$
\noindent
where the length of $y$ in elementary generators is smaller than 
the length of $x$.
By induction we get that ${}^xy_{\a}(a,b)\equiv y_{\a}(a,b)\pamod{E(\Phi,R,A\B)}$, as claimed. This proves Theorem 2. 
\par
It is clear that Theorem 3 immediately follows. Indeed, to derive the first 
item, observe that
$$ y_{\a}(a_1+a_2,b)=[x_{\a}(a_1+a_2),x_{-\a}(b)]= 
[x_{\a}(a_1)x_{\a}(a_2),x_{-\a}(b)]. $$
\noindent
Using multiplicativity of the commutator w.~r.`t.\ the first argument, 
we see that
$$ y_{\a}(a_1+a_2,b)={}^{x_{\a}(a_1)}[x_{\a}(a_2),x_{-\a}(b)]\cdot
[x_{\a}(a_1),x_{-\a}(b)]=
{}^{x_{\a}(a_1)}y_{\a}(a_2,b)\cdot y_{\a}(a_1,b). $$
\noindent
It remains to apply Theorem 2.  The second item is similar, and the 
third item follows. The last item is obvious from the definition.

%%%%%%%%%%%%%%%%%%%%%%%%%%%%%%%%%%

\section{Proof of Theorems 4 and 5: the case $\A_2$}

We are now all set to take up the proof of Theorems 4 and 5. 
In the present section we prove Theorems 4 and 5 for simply laced
systems.

\subsection{Structure of the proof} The proof will be 
subdivided into a sequence of five lemmas, which either simultaneously
establish congruences in Theorems 4 and 5, for some pairs of roots of
the same length, or reduce elementary commutators for short roots to 
elementary commutators for long roots. These five cases are:
\par\smallskip
$\bullet$ Two roots $\a$ and $\b$ that can be embedded into $\A_2$, Lemma 4,
\par\smallskip
$\bullet$ Two short roots in $\C_2$, Lemma 5,
\par\smallskip
$\bullet$ Two long roots in $\C_2$, Lemma 6,
\par\smallskip
$\bullet$ A short and a long root in $\C_2$, Lemma 7,
\par\smallskip
$\bullet$ A short and a long root in $\G_2$, Lemma 8.
\par\smallskip\noindent
Already Lemma 4 suffices to establish Theorem 4, and thus
also Theorem 1, for the case of simply-laced systems. It
also reduces both long and short elementary commutators 
in $\F_4$, long elementary commutators in $\B_l$, $l\ge 3$, 
and $\G_2$ and short elementary commutators in 
$\C_l$, $l\ge 3$, to such elementary commutators for a single 
root of that length. After that, Lemmas 5--7 
completely settle the case of doubly laced root systems. Finally, 
Lemma 8 is only needed for $\G_2$. Observe that together with 
Lemma 4 it immediately implies also the necessary congruences
for pairs of  short roots in $\G_2$. 
\par\smallskip\noindent
{\bf Warning.} A similar strategy does not work for $\C_2$ since
in this case the congruences for long roots in Theorem 5 are 
{\it weaker\/}, than the desired congruences for short roots. 
This compels us to derive the congruences for pairs of short 
roots and for pairs of long roots independently, {\it before\/} comparing
elementary commutators for short roots with those for long roots. This
makes $\C_2$ the most exacting case of all.

\subsection{Two roots in $\A_2$}
The first of these lemmas was essentially contained already in
\cite{NZ2}, Lemma 5, and \cite{NZ3}, Lemma 11. Of course,
there we used matrix language. For the sake of completeness, 
and also as a template for the following more difficult lemmas,
below we reproduce its proof in the language of roots.

%%%%%%%%%%%%%%%%%%%%%%%%%%%%%

\begin{lemma}
Assume that the roots $\a,\b\in\Phi$ of the same length
can be embedded into a subsystem of type $\A_2$. Then
for all $a\in A$, $b\in B$, $c\in R$, one has\/{\rm:}
$$ y_{\a}(ac,b)\equiv y_{\b}(a,cb)\pamod{E(\Phi,R,AB)}. $$
\end{lemma}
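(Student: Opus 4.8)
The plan is to work entirely inside a Chevalley group of type $\A_2$, which we may identify with (the elementary subgroup of) $\SL(3,R)$. Fix the two roots $\a,\b$; since they lie in an $\A_2$-subsystem and have the same length, there is a root $\g$ with $\a+\g=\b$ (or a symmetric configuration), and the Chevalley commutator formula is the clean one: $[x_{\g}(u),x_{\a}(v)]=x_{\b}(\pm uv)$ with a single structure constant, which we normalise to $+1$. The whole proof is then the same kind of ``rewriting the exponent'' computation as in \S~2, except now we genuinely change the root carrying the elementary commutator rather than merely showing centrality.

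First I would write $x_{\a}(ac)=[x_{-\g}(1),x_{\b}(ac)]\cdot(\text{something in }X_{\b+\g}\text{ or trivial})$ --- more precisely, pick the $\A_2$-configuration so that $\a=\b-\g$ and $x_{\a}(ac)=[x_{-\g}(1),x_{\b}(ac)]$ exactly (no tail, since in $\A_2$ each such commutator has one term). Similarly $x_{-\a}(b)=[x_{\g}(1),x_{-\b}(b)]$. The idea is to substitute these into $y_{\a}(ac,b)=[x_{\a}(ac),x_{-\a}(b)]$ and then transport everything past elements of the unipotent radical $U_r$ and opposite radical $U_r^-$ of the rank-$1$ parabolic attached to $\b$, using Lemma~3 to kill the resulting correction terms modulo $E(\Phi,R,AB)$. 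The cleanest bookkeeping is: by Theorem~2 the class of $y_{\a}(ac,b)$ mod $E(\Phi,R,AB)$ is unchanged under conjugation by any $x\in E(\Phi,R)$, so I can freely conjugate by $x_{\g}(1)$, $x_{-\g}(1)$, etc.; then a short commutator-identity manipulation rewrites the conjugated element as $y_{\b}(a,cb)$ times a product of root elements lying in $X_{\delta}$ for roots $\delta\ne\pm\b$, each such factor having entries in $AB$ (because one slot carries an element of $A$ and the other an element of $B$, their product being absorbed via the structure constant), hence lying in $U_{\b}(AB)$ or $U_{\b}^-(AB)\le E(\Phi,R,AB)$.

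Concretely: writing $s=x_{\g}(1)$, one checks $[x_{\a}(ac),x_{-\a}(b)]$ is conjugate via a product of $x_{\pm\g}(1)$'s to $[{}^{\pm}x_{\b}(\pm ac\cdot 1),\, {}^{\pm}x_{-\b}(\pm b)]$, and since conjugation by $x_{\pm\g}(1)$ only modifies $x_{\pm\b}$ by extra factors in $X_{\pm\b\pm\g}$ with coefficient $\pm ac$ resp.\ $\pm b$, expanding the commutator and repeatedly applying $[x,yz]=[x,y]\cdot{}^{y}[x,z]$, $[xy,z]={}^{x}[y,z]\cdot[x,z]$ collects one genuine factor $y_{\b}(ac,b)$ (or $y_{\b}(a,cb)$, after sliding the scalar $c$ across the one structure-constant slot) together with commutators of the shape $[X_{\a},X_{\pm\b}]$, $[X_{\b},X_{\g}]$, etc., all of which land in root subgroups with $AB$-entries, hence in $E(\Phi,R,AB)$. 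The ``$ac$ in one slot $=$ $a$ in one slot and $c$ in the adjacent slot'' phenomenon is exactly what the single structure constant $N_{\g\a11}=1$ buys us: $x_{\a}(ac)=[x_{-\g}(c),x_{\b}(a)]$ just as well as $[x_{-\g}(1),x_{\b}(ac)]$, and this is where the scalar $c$ migrates from the $A$-side to the $B$-side.

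I expect the main obstacle to be purely organisational rather than conceptual: keeping track of the order of the (noncommuting) correction factors and making sure each one really does have an argument in the product ideal $AB$ --- i.e.\ that no stray factor with argument merely in $A$ or merely in $R$ survives. The Levi-decomposition setup of \S~1 is designed precisely to handle this: after choosing the order on $\Phi$ so that $\b=\a_r$ is fundamental, every correction root $\delta$ is either in $\Phi^+\setminus\{\b\}$ or in $\Phi^-\setminus\{-\b\}$, so the factor lies in $U_r$ or $U_r^-$, and Lemma~3 (applied with the $A$- and $B$-levels read off from the two commutator slots) deposits it in $U_r(AB)$ or $U_r^-(AB)\le E(\Phi,R,AB)$. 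Once that is in place the congruence $y_{\a}(ac,b)\equiv y_{\b}(a,cb)$ drops out, and --- this is worth noting --- the special case $c=1$ already gives $y_{\a}(a,b)\equiv y_{\b}(a,b)\pmod{E(\Phi,R,AB)}$, i.e.\ the first bullet of Theorem~4 for roots of equal length, so this one lemma simultaneously feeds both Theorem~4 and Theorem~5 in the simply-laced case.
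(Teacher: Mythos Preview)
Your approach is the paper's approach: exploit the single-term Chevalley commutator in $\A_2$ to factor $x_{\a}(\pm ac)$ as $[x_{\b}(a),x_{\g}(-c)]$, so that the scalar $c$ migrates across, then invoke Theorem~2 and Lemma~3 to discard the corrections. The ingredients are right, but your organisation is more tangled than necessary and contains one misleading claim.

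The paper does not expand both $x_{\a}(ac)$ and $x_{-\a}(b)$, and it does not pre-conjugate $y_{\a}$ by $x_{\pm\g}(1)$. It simply writes
\[
y_{\a}(ac,b)=x_{\a}(ac)\cdot{}^{x_{-\a}(b)}x_{\a}(-ac),
\]
substitutes $x_{\a}(-ac)=[x_{\b}(a),x_{\g}(-c)]$ (with $\a=\b+\g$), and expands the conjugation by $x_{-\a}(b)$ on each factor: ${}^{x_{-\a}(b)}x_{\b}(a)=x_{-\g}(ba)x_{\b}(a)$ and ${}^{x_{-\a}(b)}x_{\g}(-c)=x_{\g}(-c)x_{-\b}(cb)$. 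The correction $x_{-\g}(ba)$ already lies in $E(\Phi,AB)$; multiplicativity in the second argument then produces $[x_{\b}(a),x_{\g}(-c)]=x_{\a}(-ac)$, which cancels the leading $x_{\a}(ac)$, leaving ${}^{x_{\g}(-c)}y_{\b}(a,cb)\equiv y_{\b}(a,cb)$ by Theorem~2. That is the whole computation.

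Your ``concretely'' paragraph asserts that $y_{\a}(ac,b)$ is conjugate via $x_{\pm\g}(1)$'s to something of the form $[x_{\b}(\pm ac),x_{-\b}(\pm b)]$. This is not literally true: conjugating $x_{\a}(u)$ by $x_{\g}(1)$ only multiplies it by an extra $x_{\a+\g}$-factor, it does not move it into $X_{\b}$; to relocate root subgroups you would need the Weyl element $w_{\g}(1)$, and that still would not move the $c$. What actually makes the argument close is the \emph{substitution} $x_{\a}(-ac)=[x_{\b}(a),x_{\g}(-c)]$, which you do identify correctly elsewhere. Drop the conjugation detour and the expansion of $x_{-\a}(b)$, and your sketch becomes the paper's proof.
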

\begin{proof}
First, assume that $\b$ is such that $\a=\b+\g$, with
$N_{\b\g11}=1$ and rewrite the elementary commutator
$ y_{\a}(ac,b)=\big[x_{\a}(ac),x_{-\a}(b)\big]$ as
$$  y_{\a}(ac,b)=x_{\a}(ac)\cdot{}^{x_{-\a}(b)}x_{\a}(-ac)=
x_{\a}(ac)\cdot{}^{x_{-\a}(b)}\big[x_{\b}(a),x_{\g}(-c)\big]. $$
\noindent
Expanding the conjugation by $x_{-\a}(b)$, we see that 
$$  y_{\a}(ac,b)=x_{\a}(ac)\cdot\big[{}^{x_{-\a}(b)}x_{\b}(a),{}^{x_{-\a}(b)}x_{\g}(-c)\big] = 
x_{\a}(ac)\cdot\big[x_{-\g}(ba)x_{\b}(a),x_{\g}(-c)x_{-\b}(cb)\big]. $$
\noindent
Now, the first factor $x_{-\g}(ba)$ of the first argument in this last commutator already belongs to the group $E(\Phi,AB)$ which is 
contained in $E(\Phi,R,AB)$. Thus, as above, 
$$  y_{\a}(ac,b)\equiv  x_{\a}(ac)\cdot\big[x_{\b}(a),x_{\g}(-c)x_{-\b}(cb)\big] \pamod{E(\Phi,R,AB)}. $$
\noindent
Using multiplicativity of the commutator w.~r.~t.\ the second argument, 
cancelling the first two factors of the resulting 
expression, and then applying Theorem~2 we see that
for a pair of roots $\a,\b$ at angle $\pi/3$, one has
$$  y_{\a}(ac,b)\equiv 
{}^{x_{\g}(-c)}\big[x_{\b}(a),x_{-\b}(cb)\big] 
\equiv \big[x_{\b}(a),x_{-\b}(cb)\big]\equiv y_{\b}(a,cb)
\pamod{E(\Phi,R,AB)}, $$
\noindent
as claimed.
Obviously, one can pass from any root in $\A_2$ to any other
such root in not more than 3 such elementary steps.
\end{proof}

Joining two roots of the same length by a sequence of roots
where every two neighbours sit in a subsystem of type $\A_2$, 
we obtain the following corollary.

\begin{corollary}
Assume that the roots $\a,\b\in\Phi$ of the same length
and one of the following holds\/{\rm:}
\par\smallskip
$\bullet$ $\Phi=\A_l,\D_l,\E_l,\F_4$, 
\par\smallskip
$\bullet$ $\Phi=\B_l$, $l\ge 3$, and $\a,\b$ are long,
\par\smallskip
$\bullet$ $\Phi=\C_l$, $l\ge 3$, and $\a,\b$ are short.
\par\smallskip
$\bullet$ $\Phi=\G_2$, and $\a,\b$ are long,
\par\smallskip\noindent
Then for all $a\in A$, $b\in B$, $c\in R$, one has\/{\rm:}
$$ y_{\a}(ac,b)\equiv y_{\b}(a,cb)\pamod{E(\Phi,R,AB)}. $$
\end{corollary}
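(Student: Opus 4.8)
The plan is to deduce the corollary directly from Lemma~4 by a connectivity argument inside the appropriate subgraph of the root system. For a fixed length (long or short, as dictated by the four listed cases), form the graph whose vertices are the roots of $\Phi$ of that length and whose edges join two roots $\gamma,\delta$ precisely when they can be embedded into a common subsystem of type $\A_2$ in which they have the same length --- equivalently, when the angle between them is $\pi/3$ or $2\pi/3$ and they span an $\A_2$. Lemma~4 says that along any such edge the elementary commutators $y_\gamma(\cdot,\cdot)$ and $y_\delta(\cdot,\cdot)$ are interchangeable modulo $E(\Phi,R,AB)$, once we slide the scalar $c$ from one argument to the other. So all that remains is to verify that this graph is connected in each of the four cases, and then to chain Lemma~4 along a path from $\a$ to $\b$.

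The key steps, in order: first I would record the elementary move provided by Lemma~4, namely $y_{\gamma}(ac,b)\equiv y_{\delta}(a,cb)\pmod{E(\Phi,R,AB)}$ whenever $\gamma,\delta$ are joined by an edge; composing two such moves along $\gamma\to\delta\to\ep$ gives $y_{\gamma}(ac_1c_2,b)\equiv y_{\ep}(a,c_1c_2 b)$, and since $a,b,c$ range over all of $A$, $B$, $R$ and every element of $R$ is trivially a product $c=c\cdot 1$, iterating the move along a path of any length $n$ still yields $y_{\a}(ac,b)\equiv y_{\b}(a,cb)\pmod{E(\Phi,R,AB)}$ with a single scalar $c$. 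Second, I would check connectivity case by case. For the simply laced systems $\A_l,\D_l,\E_l$ all roots have the same length and the graph is connected because the Weyl group acts transitively and is generated by reflections in roots, any two of which sit in an $\A_1\times\A_1$ or $\A_2$; concretely one walks between adjacent roots of the standard base, each consecutive pair spanning an $\A_2$. For $\F_4$ the long roots form a $\D_4$ subsystem and the short roots another $\D_4$ subsystem, both connected by the simply laced argument. For $\B_l$, $l\ge 3$, the long roots $\pm e_i\pm e_j$ form a $\D_l$ subsystem, again connected; for $\C_l$, $l\ge 3$, the short roots $\pm e_i\pm e_j$ likewise form a $\D_l$. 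For $\G_2$ the two long roots that are not negatives of each other, together with their sum/difference structure, already lie in the unique $\A_2$ subsystem of long roots, so the graph on the six long roots is a single $6$-cycle and is connected. Third, having a path $\a=\gamma_0,\gamma_1,\ldots,\gamma_n=\b$, I apply Lemma~4 successively and collapse the accumulated scalars, obtaining the asserted congruence.

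The main obstacle is purely the connectivity bookkeeping for the doubly laced and exceptional cases: one must be sure that in $\B_l$ (resp.\ $\C_l$) with $l\ge 3$ the long (resp.\ short) roots really do form a connected $\D_l$ and that the rank hypothesis $l\ge 3$ is exactly what makes this $\D_l$ rather than the disconnected $\D_2=\A_1\times\A_1$ that occurs at $l=2$ --- this is the reason the corollary explicitly excludes $\B_2=\C_2$, which is treated separately in Lemmas~5--7. One also has to confirm that each edge used genuinely has the two roots of the \emph{same} length inside the chosen $\A_2$, since in $\B_l$ and $\C_l$ a long and a short root can also span an $\A_2$ (with unequal lengths), and such an edge is \emph{not} what Lemma~4 supplies. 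Once the graphs are seen to be connected through same-length $\A_2$'s, the rest is a routine telescoping of Lemma~4, with Theorem~2 already absorbed into Lemma~4's statement, so no further commutator gymnastics are required.
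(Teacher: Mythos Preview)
Your approach is correct and coincides with the paper's: immediately before the corollary the authors simply remark that one joins two roots of the same length by a sequence in which every pair of neighbours sits in a common $\A_2$, and then Lemma~4 applied along the chain (with $c=1$ at all intermediate steps) gives the result. One small slip worth correcting: in $\B_l$ or $\C_l$ a long root and a short root can never span an $\A_2$ --- the rank~2 subsystem they generate is $\C_2$ or $\A_1\times\A_1$ --- so the caution you insert there is guarding against something that cannot happen.
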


The remaining cases have to be considered separately,
in the same style, as Lemma 4. However, in these cases
the roots $\b$ and $\g$ in the proof of this lemma would have
different lengths, so that it does matter, whether we put
parameter $a$ in the above calculation in the short root unipotent, 
or the long root unipotent. In fact, by choosing one way, or the other,
one gets different congruences! Also, in the case $\Phi=\G_2$ the 
structure constants have to be chosen in consistent way.

%%%%%%%%%%%%%%%%%%%%%%%%%%%%%%%%%%

\section{Proof of Theorems 4 and 5: the case $\C_2$}

In this section we prove Theorems 4 and 5 for doubly laced 
systems. This is by far the most difficult case of all, since in 
this case we have to consider short roots and long roots
separately.

\subsection{Two short roots}
The following lemma settles the case of short roots in $\B_l$, 
$l\ge 2$.

\begin{lemma}
Assume that the roots $\a,\b\in\Phi$ can be embedded as
short roots into a subsystem of type $\C_2$. Then
for all $a\in A$, $b\in B$, $c\in R$, one has\/{\rm:}
$$ y_{\a}(ac,b)\equiv y_{\b}(a,cb)\pamod{E(\Phi,R,AB)}. $$
\end{lemma}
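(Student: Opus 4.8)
The plan is to mimic the proof of Lemma~4 as closely as possible, but now keeping careful track of root lengths. We work inside a subsystem of type $\C_2$ and, since $\a$ and $\b$ are both short and the Weyl group of $\C_2$ acts transitively on short roots, it suffices to treat a single elementary step: two short roots $\a,\b$ with $\a=\b+\g$ for a suitable root $\g$. The key point is that here $\g$ must be \emph{long}: if $\a$ and $\b$ are short roots of $\C_2$ whose difference is a root, that difference is long. So we have a Chevalley commutator relation of the shape $[x_{\b}(s),x_{\g}(t)]=x_{\a}(N_{\b\g11}st)\cdot x_{2\b+\g}(\ast)$, where $2\b+\g$ is again short and $\b+2\g$ is not a root (the only extra factor is $x_{2\b+\g}$). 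Choosing the parametrisation so that $N_{\b\g11}=1$, we get $x_{\a}(ac)=[x_{\b}(a),x_{\g}(c)]\cdot x_{2\b+\g}(\ast)^{-1}$ with the correction term of level $AR$ in the short root $2\b+\g$.

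First I would write $y_{\a}(ac,b)=\big[x_{\a}(ac),x_{-\a}(b)\big]=x_{\a}(ac)\cdot{}^{x_{-\a}(b)}x_{\a}(-ac)$, then substitute the above expression for $x_{\a}(-ac)$ as a product of $[x_{\b}(-a),x_{\g}(c)]$ (or whichever sign bookkeeping is correct) and the correction factor in $X_{2\b+\g}$. Conjugating by $x_{-\a}(b)$ and expanding, the conjugate ${}^{x_{-\a}(b)}x_{\b}(-a)$ produces $x_{\b}(-a)$ times factors in root subgroups $X_{\d}$ with $\d$ a positive combination involving $-\a$ and $\b$; crucially each such extra factor lies in $E(\Phi,AB)\le E(\Phi,R,AB)$ because it carries a product of a parameter from $A$ and a parameter from $B$. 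Similarly ${}^{x_{-\a}(b)}x_{\g}(c)$ spreads into $x_{\g}(c)$ times factors involving $b$ and $c$ but no element of $A$, so those are not yet in $E(\Phi,R,AB)$ — however they all lie in $U_r$ or $U_r^-$ for the parabolic attached to $\b$, and by Theorem~2 together with Lemma~3 they can be moved past $y_{\b}(\cdot,\cdot)$ and killed modulo $E(\Phi,R,AB)$, exactly as in the proof of Theorem~2. The correction factor in $X_{2\b+\g}$ is handled the same way: conjugating it by $x_{-\a}(b)$ (note $2\b+\g\neq\pm\a$) keeps it, modulo $E(\Phi,R,AB)$, inside a root subgroup centralising $y_{\b}$ modulo the relative group.

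After these reductions the expression collapses, just as in Lemma~4, to ${}^{x_{\g}(c)}\big[x_{\b}(a),x_{-\b}(cb)\big]$ modulo $E(\Phi,R,AB)$, and one more application of Theorem~2 (centrality of elementary commutators modulo $E(\Phi,R,AB)$) removes the ${}^{x_{\g}(c)}$ and gives $y_{\b}(a,cb)$. The main obstacle — and the reason $\C_2$ deserves its own section rather than being folded into the $\A_2$ argument — is the bookkeeping of the extra factor $x_{2\b+\g}(\ast)$ in the Chevalley commutator formula and of the commutators ${}^{x_{-\a}(b)}x_{\g}(c)$, ${}^{x_{-\a}(b)}x_{2\b+\g}(\ast)$: one must check that every new root appearing is different from $\pm\a$ (so that Lemma~3 and Theorem~2 apply) and that no factor carrying a parameter purely from $A$ survives outside $E(\Phi,R,AB)$ except the intended $x_{\b}(a)$, which eventually merges into $y_{\b}(a,cb)$. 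Since $\a,\b$ are short this works; the genuinely delicate sign and structure-constant choices (using the fixed parametrisation $[x_{\a}(a),x_{\b}(b)]=x_{\a+\b}(ab)x_{\a+2\b}(ab^2)$ recorded in \S1) are what make the long-root case of $\C_2$, handled in the following lemmas, strictly harder.
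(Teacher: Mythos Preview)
Your approach is essentially the paper's: write $x_{\a}(-ac)=x_{\a+\b}(a^2c)\cdot[x_{\b}(a),x_{\g}(-c)]$ with $\g$ long, conjugate by $x_{-\a}(b)$, discard the level-$AB$ debris, cancel the prefix against $[x_{\b}(a),x_{\g}(-c)]$, and invoke Theorem~2 to strip the remaining conjugation. Two small slips worth fixing: in $\C_2$ with $\b$ short and $\g$ long the root $2\b+\g=\a+\b$ is \emph{long} (it is the highest root), not short; and among the factors arising from ${}^{x_{-\a}(b)}x_{\g}(-c)$, namely $x_{-\b}(cb)$ and $x_{-\a-\b}(cb^2)$, the first lies in the Levi $L_{\b}$, not in $U_{\b}^{\pm}$ --- it is exactly the factor you must \emph{keep} to form $y_{\b}(a,cb)$, while only $x_{-\a-\b}(cb^2)$ is disposed of via Lemma~3 (or directly, since $[x_{\b}(a),x_{-\a-\b}(cb^2)]\in E(\Phi,AB)$). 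Neither slip affects the argument's validity.
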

\begin{proof}
First assume that $\a$ and $\b$ are linearly independent.
Then there exists a long root $\g$ such that $\a=\b+\g$ and
we can choose parametrisation of root subgroups such that 
$N_{\b\g11}=N_{\b\g21}=1$. Actually, the signs mostly do 
not play any role here, apart from one position. Namely, we
should eventually get that $y_{\a}(ac,b)$ is equivalent to 
$y_{\b}(a,cb)$, and not to ${y_{\b}(a,cb)}^{-1}$. They 
were calculated in $\Sp(4,R)$.
\par
Expanding the elementary 
commutator $ y_{\a}(ac,b)$ as in Lemma 4 and plugging in 
$x_{\a}(-ac)=x_{\a+\b}(a^2c)[x_{\b}(a),x_{\g}(-c)]$, 
we get
$$  y_{\a}(ac,b)=x_{\a}(ac)\cdot{}^{x_{-\a}(b)}x_{\a}(-ac)=
x_{\a}(ac)\cdot {}^{x_{-\a}(b)}x_{\a+\b}(a^2c)
\cdot {}^{x_{-\a}(b)}\big[x_{\b}(a),x_{\g}(-c)\big]. $$
\noindent
Expanding the conjugation by $x_{-\a}(b)$, we see that 
$$  y_{\a}(ac,b)= 
x_{\a}(ac)\cdot {}^{x_{-\a}(b)}x_{\a+\b}(a^2c)\cdot
\big[x_{-\g}(ba)x_{\b}(a),x_{\g}(-c)x_{-\b}(cb)x_{-\a-\b}(cb^2)\big]. $$
\par
Now, the first factor $x_{-\g}(ba)$ of the first argument in this last commutator already belongs to the group $E(\Phi,AB)$ which is 
contained in $E(\Phi,R,AB)$. Also, 
$$ {}^{x_{-\a}(b)}x_{\a+\b}(a^2c)\equiv x_{\a+\b}(a^2c)\pamod{E(\Phi,R,AB)}. $$
\noindent
Thus, as above, 
$$  y_{\a}(ac,b)\equiv  x_{\a}(ac)x_{\a+\b}(a^2c)\cdot[x_{\b}(a),x_{\g}(-c)x_{-\b}(cb)x_{-\a-\b}(cb^2)\big] \pamod{E(\Phi,R,AB)}. $$
\noindent
Using multiplicativity of the commutator w.~r.~t.\ the second argument, 
cancelling the first commutator of the resulting 
expression, we see that
\begin{multline*}
y_{\a}(ac,b)\equiv 
\big[x_{\b}(a),x_{-\b}(cb)x_{-\a-\b}(cb^2)\big]=\\y_{\b}(a,cb)\cdot
{}^{x_{-\b}(cb)}\big[x_{\b}(a),x_{-\a-\b}(cb^2)\big]
\equiv y_{\b}(a,cb)
\pamod{E(\Phi,R,AB)}. 
\end{multline*}
\par
Obviously, one can pass from a short root $\a$ in $\C_2$ to the opposite 
root $-\a$ in two such elementary steps.
\end{proof}

%%%%%%%%%%%%%%%%%%%%%%%%%%%%%

\subsection{Two long roots}
The following lemma settles the case of long roots in $\C_l$, 
$l\ge 2$. This case is exceptional, since here, unlike all other
cases, the arguments of an elementary commutator are only
balanced up to squares. In the following lemma we establish 
the first related congruence in Theorem 5.

\begin{lemma}
Assume that the roots $\a,\g\in\Phi$ can be embedded as
long roots into a subsystem of type $\C_2$. 
Then for all $a\in A$, $b\in B$, $c\in R$, one has\/{\rm:}
$$ y_{\a}(ac^2,b)\equiv y_{\g}(a,c^2b)\pamod{E(\Phi,R,AB)}. $$
\end{lemma}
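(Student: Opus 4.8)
The plan is to imitate the proof of Lemma~4, but to keep careful track of which root carries which parameter, since in $\C_2$ the two roots $\b,\g$ flanking a long root $\a=\b+2\g$ in the $\A_2$-style expansion have different lengths. Concretely, I fix a $\C_2$-subsystem with long fundamental root $\g$ and short fundamental root $\delta$, so that $\a=\g+2\delta$ is the other long root, and I work with the parametrisation $[x_{\g}(a),x_{\delta}(b)]=x_{\g+\delta}(ab)x_{\g+2\delta}(ab^2)$ from \S~1. The starting point is to write $y_{\a}(ac^2,b)=x_{\a}(ac^2)\cdot{}^{x_{-\a}(b)}x_{\a}(-ac^2)$ and then to realise $x_{\a}(-ac^2)=x_{\g+2\delta}(-ac^2)$ as (a correction term times) a commutator $[x_{\g}(a),x_{\delta}(c)]$ — this is exactly where the square $c^2$ appears, because the $\g+2\delta$-coordinate of that commutator is $a c^2$, while the stray $\g+\delta$-coordinate is $ac$, which must be cancelled by an explicit factor $x_{\g+\delta}(-ac)$.

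The main steps, in order, are: (i) expand $x_{\a}(-ac^2)$ through the Chevalley formula as $x_{\g+\delta}(-ac)\cdot[x_{\g}(a),x_{\delta}(c)]$ (up to the fixed signs), and substitute into the exponent; (ii) push the conjugation by $x_{-\a}(b)$ inside the commutator, producing ${}^{x_{-\a}(b)}x_{\g}(a)$ and ${}^{x_{-\a}(b)}x_{\delta}(c)$, each of which is, by the Chevalley commutator formula in $\C_2$, a product of $x_{\g}(a)$ (resp.\ $x_{\delta}(c)$) with further root unipotents whose parameters lie in $BA$ (because they involve $b$ together with $a$ or $c$); (iii) discard every factor whose parameter lies in $AB$, since those sit in $E(\Phi,AB)\le E(\Phi,R,AB)$, and discard the ``protruding'' factors $x_{\g+\delta}(\pm ac)$, ${}^{x_{-\a}(b)}x_{\g+\delta}(\cdots)$ etc.\ using Lemma~3 and Theorem~2 (elementary commutators and radical elements are central modulo $E(\Phi,R,AB)$); (iv) after these reductions one is left with $\big[x_{\g}(a),\,x_{-\g}(c^2b)x_{-\g-\delta}(\cdots)x_{-\g-2\delta}(\cdots)\big]$, or rather its mirror image with the roles of $\a$ and $-\a$ sorted out; (v) use multiplicativity of the commutator in the second argument together with Theorem~2 to strip off all factors except $x_{-\g}(c^2b)$, arriving at $y_{\g}(a,c^2b)$. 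Passing between the two long roots $\a$ and $-\a$ of the $\C_2$-subsystem then costs at most two such steps, exactly as in Lemma~5.

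The delicate point — and the reason this is a \emph{separate} lemma with a \emph{weaker} conclusion — is bookkeeping of the short-root coordinates. When one conjugates $x_{\delta}(c)$ by $x_{-\a}(b)=x_{-\g-2\delta}(b)$, the Chevalley formula in $\C_2$ produces not only a term in $X_{-\g-\delta}$ with parameter a multiple of $cb$ but also, because $-\g-2\delta$ and $\delta$ span a $\C_2$ with $\delta$ short, a term in $X_{-\g}$ whose parameter is a multiple of $cb\cdot c = c^2 b$ (the short root is added twice), and it is precisely this $c^2b$, not $cb$, that survives into the final commutator $y_{\g}(\,\cdot\,,c^2b)$. Symmetrically, starting the expansion with $c$ placed in the short root $\delta$ forces $ac^2$ on the other side. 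I expect this coordinate-tracking — getting the signs right so that the answer is $y_{\g}(a,c^2b)$ and not its inverse, and confirming that no uncancelled $c$-linear term remains — to be the real obstacle; once it is done, the structural reductions via Lemmas~2 and~3 are routine. It is worth recording that the same computation, read with multiplicativity in mind, yields the companion congruence $y_{\a}(ac,b)^2\equiv y_{\g}(a,cb)^2$ of Theorem~5: squaring effectively supplies the missing factor of $c$, since modulo $E(\Phi,R,AB)$ the elementary commutators are bilinear by Theorem~3, but I would prove that as a short separate corollary rather than fold it into this lemma.
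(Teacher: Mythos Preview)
Your approach is essentially the paper's: write $x_{\a}(-ac^2)$ as a correction times $[x_{\g}(\pm a),x_{\delta}(\pm c)]$ with $c$ on the \emph{short} root, conjugate by $x_{-\a}(b)$, cancel the correction against the leading $[x_{\g},x_{\delta}]$-part, and read off $y_{\g}(a,c^2b)$ from what remains. One slip to fix in your write-up: in step~(ii) you assert that the extra factors from both conjugations lie in $BA$, but in fact ${}^{x_{-\a}(b)}x_{\g}(a)=x_{\g}(a)$ (the long roots $-\a$ and $\g$ are orthogonal, so they commute outright), whereas ${}^{x_{-\a}(b)}x_{\delta}(c)$ acquires factors $x_{-\g-\delta}(\pm bc)\,x_{-\g}(\pm bc^2)$ with parameters in $B$, \emph{not} in $AB$ --- precisely as you yourself say in the ``delicate point'' paragraph; it is the $x_{-\g}(\pm bc^2)$ factor (not any $x_{-\g-2\delta}$ term, which does not arise) that survives and produces $y_{\g}(a,c^2b)$.
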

\begin{proof}
First, let $\a$ and $\g$ be linearly independent long roots. As in
the previous lemma we choose a short root $\b$ such that 
$\a=2\b+\g$ and specify the same choice of signs. 
\par
Expanding the elementary 
commutator $ y_{\a}(ac^2,b)$ as in Lemma 4 and plugging in 
$x_{\a}(-ac^2)=x_{\g+\b}(ac)[x_{\b}(c),x_{\g}(-a)]$, 
we get
$$  y_{\a}(ac^2,b)=x_{\a}(ac^2)\cdot{}^{x_{-\a}(b)}x_{\a}(-ac^2)=
x_{\a}(ac^2)\cdot {}^{x_{-\a}(b)}x_{\g+\b}(ac)
\cdot {}^{x_{-\a}(b)}\big[x_{\b}(c),x_{\g}(-a)\big]. $$
\noindent
Expanding the conjugation by $x_{-\a}(b)$, we see that 
$$  y_{\a}(ac^2,b)= 
x_{\a}(ac^2)\cdot {}^{x_{-\a}(b)}x_{\g+\b}(ac)\cdot
\big[x_{\b}(c)x_{-\b-\g}(cb)x_{-\g}(c^2b),x_{\g}(-a)\big]. $$
\par
As usual,
$$ {}^{x_{-\a}(b)}x_{\g+\b}(ac)\equiv x_{\g+\b}(ac)\pamod{E(\Phi,R,AB)}. $$
\noindent
so that the first two factors of the above expression are the 
inverse of $[x_{\b}(c),x_{\g}(-a)]$. Thus, up to a congruence
modulo $E(\Phi,R,AB)$ we get 
\begin{multline*}
y_{\a}(ac^2,b)\equiv 
\big[x_{-\b-\g}(cb)x_{-\g}(c^2b),x_{\g}(-a)\big] \equiv\\
y_{-\g}(c^2b,-a)\equiv y_{\g}(a,c^2b)
\pamod{E(\Phi,R,AB)}.
\end{multline*}
\par
Obviously, one can pass from a long root $\a$ in $\C_2$ to the opposite 
root $-\a$ in two such elementary steps.
\end{proof}

%%%%%%%%%%%%%%%%%%%%%%%%%%%%%

\subsection{A short root and a long root}
The following lemma establishes connection between the classes 
of short and long elementary commutators in doubly laced 
systems.

\begin{lemma}
Assume that the roots $\a,\g\in\Phi$ can be embedded as
a short root and a long root into a subsystem of type $\C_2$. 
Then for all $a\in A$, $b\in B$, $c\in R$, one has\/{\rm:}
$$ y_{\a}(ac,b)\equiv y_{\g}(a,cb)^2\pamod{E(\Phi,R,AB)}. $$
\end{lemma}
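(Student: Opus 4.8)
The plan is to mimic the proofs of Lemmas 4--7: write the short elementary commutator $y_{\a}(ac,b)$ as a product using the Chevalley commutator formula, move the ``bad'' factors that already lie in $E(\Phi,AB)\le E(\Phi,R,AB)$ out of the way, and then recognize what remains as (a power of) a long elementary commutator, using Theorem 2 to kill superfluous conjugations. First I would fix the labelling so that $\a$ is short, $\g$ is long, and they sit inside a copy of $\C_2$; I would pick the parametrisation normalised in \S~1 so that $[x_{\g}(a),x_{\b}(c)]=x_{\g+\b}(ac)x_{\g+2\b}(ac^2)$ for the long root $\g$ and short root $\b$ with $\a=\g+\b$ (so $\g+2\b$ is the other long root, and $\a$ is the sum of the long root $\g$ and the short root $\b$). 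As in Lemma 4, expand
$$ y_{\a}(ac,b)=x_{\a}(ac)\cdot{}^{x_{-\a}(b)}x_{\a}(-ac), $$
and substitute for $x_{\a}(-ac)$ its expression coming from $[x_{\g}(\cdot),x_{\b}(\cdot)]$, picking which of the two arguments carries the ring element $c$ and which carries $a$ so that, after the dust settles, the surviving commutator is long-root valued. The natural choice is to write $x_{\a}(-ac)$ via $[x_{\g}(-a),x_{\b}(c)]$ up to a correction in $X_{\g+2\b}$.

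Next I would conjugate by $x_{-\a}(b)$ and expand, using the Chevalley formula to compute ${}^{x_{-\a}(b)}x_{\g}(-a)$ and ${}^{x_{-\a}(b)}x_{\b}(c)$; since $-\a=-\g-\b$, the commutators $[x_{-\a}(b),x_{\g}(-a)]$ and $[x_{-\a}(b),x_{\b}(c)]$ produce root elements along $-\b$, $-\g-2\b$, etc., and the crucial point is that all factors involving the product $ab$ (hence lying in $AB$) belong to $E(\Phi,AB)\le E(\Phi,R,AB)$ and can be discarded modulo $E(\Phi,R,AB)$, exactly as in Lemmas 4--7. The correction term in $X_{\g+2\b}$ is conjugated by $x_{-\a}(b)$; by Lemma~3 (Levi decomposition, taking $\a=\a_r$ fundamental so that $x_{\g+2\b}$ lies in the unipotent radical) this conjugation changes it only by an element of $E(\Phi,R,AB)$, so it too disappears after it is cancelled against its partner. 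What should remain, after cancelling $x_{\a}(ac)$ with the leading factor and using multiplicativity of the commutator in each slot together with Theorem~2 to erase the outer conjugations, is a commutator of the form $\big[x_{-\g}(\ast),x_{\g}(\ast)\big]$ whose arguments are products of $a$, $b$, $c$ --- and this is where the factor of $2$ must appear: the short root $\b$ has length-squared half that of $\g$, so the coefficient $ac^2$-type terms and the two long roots $\g$, $\g+2\b$ conspire to give $y_{\g}(a,cb)$ twice rather than once. Concretely I expect a calculation of the shape $y_{\a}(ac,b)\equiv y_{\g}(a,cb)\cdot y_{\g+2\b}(a,cb)\pmod{E(\Phi,R,AB)}$, followed by Corollary~1 (two long roots in $\C_2$, already balanced --- wait, no: long roots in $\C_2$ are \emph{not} governed by Corollary~1, only by Lemma~6), so instead I would invoke \emph{Lemma~4}, which does apply to the pair $\g,\g+2\b$ of long roots since they span a subsystem of type $\A_2$ inside the ambient system --- but inside $\C_2$ alone two long roots do \emph{not} lie in an $\A_2$. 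This is the subtlety the ``Warning'' in \S~3 flags.

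The hard part will therefore be arranging the substitution so that the two long-root contributions are literally equal, i.e.\ producing $y_{\g}(a,cb)^2$ directly, rather than a product $y_{\g}(a,cb)\,y_{\g'}(a,cb)$ that one would then have to identify termwise. I would handle this by choosing the expansion symmetrically in the two long roots $\g$ and $\g+2\b$: note that reflecting in the short root $\b$ (an element of the Weyl group $W(\C_2)$) swaps $\g\leftrightarrow\g+2\b$ and fixes $\a$, so conjugating the whole identity by the corresponding Weyl element $w_{\b}$ and comparing with the original shows $y_{\g}(a,cb)\equiv y_{\g+2\b}(a,cb)\pmod{E(\Phi,R,AB)}$ \emph{a priori} (using that $w_{\b}$ normalises $E(\Phi,R)$ and $E(\Phi,R,AB)$, together with Theorem~2 to absorb the toral corrections). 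Granting that, the product of the two contributions collapses to $y_{\g}(a,cb)^2$, which is the assertion. The remaining bookkeeping --- the precise structure constants, the order of the factors in the $\C_2$ Chevalley formula, and checking that every discarded factor genuinely lies in $E(\Phi,AB)$ --- is routine and parallel to Lemmas 5 and 6, with the sole genuinely new ingredient being the appearance of the exponent $2$, which I have just traced to the two long roots summing to give one short root.
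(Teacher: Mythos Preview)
Your overall strategy---expand $y_{\a}(ac,b)=x_{\a}(ac)\cdot{}^{x_{-\a}(b)}x_{\a}(-ac)$, substitute for $x_{\a}(-ac)$ via the Chevalley commutator formula, conjugate, discard factors already in $E(\Phi,AB)$, and apply Theorems~2 and~3---is exactly the paper's approach. Where you go astray is in predicting the \emph{mechanism} that produces the exponent~$2$.

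You expect the computation to yield two separate long-root contributions, $y_{\g}(a,cb)\cdot y_{\g+2\b}(a,cb)$, which you then have to merge (hence the detour through Weyl reflections). This is not what happens. If you actually carry out the calculation you outline, the correction term in $X_{\g+2\b}$ cancels, and only a \emph{single} long-root commutator survives. The factor $2$ enters as a \emph{structure constant}: when you compute ${}^{x_{-\a}(b)}x_{\b}(-c)$, the roots $-\a$ and $\b$ are both short and their sum $-\a+\b=-\g$ is long; in $\C_2$ the Chevalley constant for short${}+{}$short${}\to{}$long is $\pm 2$, so the conjugate picks up a factor $x_{-\g}(-2cb)$. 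After the usual cancellations one obtains
$$ y_{\a}(ac,b)\equiv \big[x_{-\g}(-2cb)x_{\b}(-c),\,x_{\g}(a)\big]\equiv y_{-\g}(-2cb,a)\pamod{E(\Phi,R,AB)}, $$
and then Theorem~3 (additivity and the sign rule) converts the coefficient $2$ into the exponent: $y_{-\g}(-2cb,a)\equiv y_{-\g}(cb,a)^{-2}\equiv y_{\g}(a,cb)^{2}$. No second long root appears, and no Weyl-group argument is needed. (Your Weyl observation that ${}^{w_{\b}}y_{\g}(a,cb)\equiv y_{\g}(a,cb)$ via Theorem~2 is correct in itself---it is how one knows $y_{\g}\equiv y_{\g+2\b}$ in general---but it is solving a problem that does not arise here.)

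So the ``hard part'' you flag is an artefact of a wrong guess about the intermediate shape; once you track the structure constants correctly the proof is as short as Lemmas~5 and~6.
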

\begin{proof}
First, assume that $\a$ and $\g$ form an angle $\pi/4$. 
We choose a short root $\b$ such that $\a=\b+\g$ and 
specify the same choice of signs. 
\par
Expanding the elementary 
commutator $ y_{\a}(ac,b)$ as in Lemma 5 and plugging in 
$x_{\a}(-ac)=x_{\a+\b}(-ac^2)[x_{\b}(-c),x_{\g}(a)]$, 
we get
$$  y_{\a}(ac,b)=x_{\a}(ac)\cdot{}^{x_{-\a}(b)}x_{\a}(-ac)=
x_{\a}(ac)\cdot {}^{x_{-\a}(b)}x_{\a+\b}(-ac^2)
\cdot {}^{x_{-\a}(b)}\big[x_{\b}(-c),x_{\g}(a)\big]. $$
\noindent
Expanding the conjugation by $x_{-\a}(b)$, we see that 
$$  y_{\a}(ac,b)= 
x_{\a}(ac)\cdot {}^{x_{-\a}(b)}x_{\a+\b}(-ac^2)\cdot
\big[x_{-\g}(-2cb)x_{\b}(-c),x_{\g}(a)x_{-\b}(-ab)x_{-\a-\b}(-ab^2)\big]. $$
\par
Now, the last two factors $x_{-\b}(-ab)x_{-\a-\b}(-ab^2)$ of the 
second argument in this last commutator already belong to the 
group $E(\Phi,AB)$ which is contained in $E(\Phi,R,AB)$. Also, 
$$ {}^{x_{-\a}(b)}x_{\a+\b}(-ac^2)\equiv x_{\a+\b}(-ac^2)\pamod{E(\Phi,R,AB)}. $$
\noindent
Thus, as above, 
$$  y_{\a}(ac,b)\equiv  x_{\a}(ac)x_{\a+\b}(-ac^2)
\cdot\big[x_{-\g}(-2cb)x_{\b}(-c),x_{\g}(a)\big] \pamod{E(\Phi,R,AB)}. $$
\noindent
Using multiplicativity of the commutator w.~r.~t.\ the first argument, and
cancelling the first commutator of the resulting expression, we see that
$$
y_{\a}(ac,b)\equiv y_{-\g}(-2cb,a)\equiv y_{-\g}(cb,a)^{-2}
\equiv y_{\g}(a,cb)^2
\pamod{E(\Phi,R,AB)}. $$
\par
Obviously, combined with the previous lemma this gives necessary 
inlcusions for all pairs of a short and a long root.
\end{proof}

\begin{corollary}
Assume that the roots $\a,\g\in\Phi$ can be embedded as
long roots into a subsystem of type $\C_2$. Then
for all $a\in A$, $b\in B$, $c\in R$, one has\/{\rm:}
$$ y_{\a}(ac,b)^2\equiv y_{\b}(a,cb)^2\pamod{E(\Phi,R,AB)}. $$
\end{corollary}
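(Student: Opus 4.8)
The plan is to deduce this purely from Lemma~7 (the short/long comparison in $\C_2$) together with Lemma~5 (the short/short comparison in $\C_2$), by passing to \emph{short} elementary commutators and back. First I would fix a subsystem of type $\C_2$ in which the long roots $\a$ and $\b$ both occur, and inside it pick a short root $\mu$ forming an angle $\pi/4$ with $\a$ and a short root $\nu$ forming an angle $\pi/4$ with $\b$. Such short roots always exist, and an inspection of the root diagram of $\C_2$ shows that one may moreover take $\mu$ and $\nu$ to be either equal or linearly independent, so that Lemma~5 applies to the pair $\mu,\nu$.

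The point is that Lemma~7 may be invoked with the first argument of a short elementary commutator factored in two different ways. Applying it to the pair $(\mu,\a)$ with the first argument read as $(ac)\cdot 1$ gives
$$ y_{\mu}(ac,b)=y_{\mu}\big((ac)\cdot 1,b\big)\equiv y_{\a}(ac,b)^2\pamod{E(\Phi,R,AB)}, $$
while applying it to the pair $(\nu,\b)$ with the first argument read as $a\cdot c$ gives
$$ y_{\nu}(ac,b)=y_{\nu}(a\cdot c,b)\equiv y_{\b}(a,cb)^2\pamod{E(\Phi,R,AB)}. $$
Lemma~5, applied to the short roots $\mu,\nu$ with the first argument again read as $(ac)\cdot 1$, gives $y_{\mu}(ac,b)\equiv y_{\nu}(ac,b)\pamod{E(\Phi,R,AB)}$ (and this is vacuous when $\mu=\nu$). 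Concatenating the three congruences yields
$$ y_{\a}(ac,b)^2\equiv y_{\mu}(ac,b)\equiv y_{\nu}(ac,b)\equiv y_{\b}(a,cb)^2\pamod{E(\Phi,R,AB)}, $$
which is the assertion.

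I do not expect any genuine obstacle: the entire substance is already contained in Lemmas~5 and~7, and the care needed is only bookkeeping --- tracking which factorisation of the parameter $ac$ is fed into each invocation, and carrying out the elementary combinatorics of $\C_2$ that produces the short roots $\mu$ and $\nu$ with the required angles and with $\mu,\nu$ linearly independent or coincident. If a cleaner exposition is wanted, the same ingredients can be repackaged as two steps: first $y_{\a}(ac,b)^2\equiv y_{\a}(a,cb)^2\pamod{E(\Phi,R,AB)}$, obtained by comparing the two factorisations $(ac)\cdot 1$ and $a\cdot c$ inside a single $y_{\mu}(ac,b)$ via Lemma~7; then $y_{\a}(a,cb)^2\equiv y_{\b}(a,cb)^2\pamod{E(\Phi,R,AB)}$, obtained from Lemma~7 applied twice together with Lemma~5 to pass from $\mu$ to $\nu$.
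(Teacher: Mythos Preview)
Your argument is correct and is essentially the paper's own proof. The paper picks a single short root $\gamma$ and writes the chain $y_{\a}(ac,b)^2\equiv y_{\gamma}(ac,b)\equiv y_{\gamma}(a,cb)\equiv y_{\b}(a,cb)^2$, using Lemma~7 at the two ends and Lemma~5 in the middle; your version simply distributes the shift of $c$ differently, absorbing it into the second application of Lemma~7 rather than into the short--short step, and allows two short roots $\mu,\nu$ instead of one. The extra care you take about the angle $\pi/4$ and the linear independence of $\mu,\nu$ is harmless but unnecessary, since Lemmas~5 and~7 are stated for arbitrary pairs of short (respectively short and long) roots in $\C_2$.
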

\begin{proof} 
Indeed, let $\g$ be any short root. Then by the previous lemma
and Lemma 5 one has
$$ y_{\a}(ac,b)^2\equiv y_{\g}(ac,b)\equiv
y_{\g}(a,cb)\equiv y_{\b}(a,cb)^2\pamod{E(\Phi,R,AB)}. $$
\end{proof}
This completes the proof of Theorems 4 and 5 for doubly laced 
root systems. 

%%%%%%%%%%%%%%%%%%%%%%%%%%%%%%%%%%

\section{Proof of Theorems 4 and 5: the case $\G_2$}

In this section we finish the proof of Theorems 4 and 5 for the
only remaining case $\Phi=\G_2$. Since in this case long roots 
themselves form a root system of type $\A_2$, the corresponding
elementary commutators are balanced with respect to all 
elements of $R$, which makes the proof quite a bit easier.

The following lemma establishes connection between the classes 
of short and long elementary commutators in $\G_2$.

\begin{lemma}
Assume that $\a,\g\in\G_2$, where $\a$ is short and $\g$ is
long. Then for all $a\in A$, $b\in B$, $c\in R$, one has\/{\rm:}
$$ y_{\a}(ac,b)\equiv y_{\g}(a,cb)^3\pamod{E(\Phi,R,AB)}. $$
\end{lemma}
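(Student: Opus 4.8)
The proof should follow the template already established by Lemma~4 through Lemma~7: expand the long elementary commutator $y_{\g}(ac,b)$, replace one of the root unipotents by a Chevalley commutator involving a short root, conjugate, discard the factors that land in $E(\Phi,AB)\le E(\Phi,R,AB)$, and read off the answer. Concretely, since $\G_2$ contains a subsystem of type $\C_2$ in which $\g$ sits as a long root and some short $\b$ with $\a=\b+\g$ (up to replacing $\a$ by a conjugate of the same length, which is harmless by Corollary~2 after Lemma~4), I would try to write $x_{\g}(-ac)$ as a product of $x_{\a}(\ast)$ and a Chevalley commutator $[x_{\b}(\ast),x_{\g}(\ast)]$ read off from the $\G_2$ structure constants recalled in \S~1. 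The plan is: expand $y_{\g}(ac,b)=x_{\g}(ac)\cdot{}^{x_{-\g}(b)}x_{\g}(-ac)$, substitute the commutator expression, push the conjugation by $x_{-\g}(b)$ inside using the Chevalley formula, and observe that every resulting ``extra'' root unipotent either has one argument in $AB$ (hence lies in $E(\Phi,AB)$) or can be absorbed modulo $E(\Phi,R,AB)$ by Lemma~3 and Theorem~2.

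After the dust settles I expect to be left with a commutator of the shape $\bigl[x_{\b}(\text{something}),\ x_{-\b}(\text{something})\cdot(\text{junk})\bigr]$ which, modulo $E(\Phi,R,AB)$, is a short elementary commutator $y_{\b}(\ast,\ast)$; and the two parameters $a$ and $cb$ should appear symmetrically split, so that this equals $y_{\b}(a,cb)$ up to the relevant correction. Then I would invoke Lemma~4 (via Corollary~2) to replace $y_{\b}(a,cb)$ by $y_{\a}(ac,b)$ on the short side — wait, more carefully: the content of this lemma is precisely the comparison between a \emph{short} commutator and a \emph{long} one, so the final identification is with the short root $\a$ and the claim is that the long one appears with multiplicity $p=3$. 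The factor $3$ is forced by the $\G_2$ structure constants: in the relations recalled in \S~1 one sees coefficients $2a^3b^2$, $3a^2b$, $-3ab^2$, $3ab$, $-3ab$, and the cube will emerge from the coefficient $a^3b$ (or from iterating the coefficient~$3$ three effective times), exactly as the square emerged from the coefficient $ab^2$ in the $\C_2$ case of Lemma~7. So the expected shape of the conclusion is $y_{\a}(ac,b)\equiv y_{\g}(a,cb)\cdot y_{\g}(a,cb)\cdot y_{\g}(a,cb)\equiv y_{\g}(a,cb)^3$, where the three factors are literally the three long root unipotents $x_{2\a+\b}$, $x_{3\a+\b}$, $x_{3\a+2\b}$ (or their images) appearing on the right of $[x_{\a}(a),x_{\b}(b)]$, recombined by Theorem~3 (additivity of $y_{\g}$ modulo $E(\Phi,R,AB)$) and Theorem~4's first item (all long roots give the same $y$).

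The main obstacle, exactly as flagged in the ``Warning'' and in the discussion preceding \S~5, is bookkeeping: the Chevalley commutator formula for a short and a long root in $\G_2$ produces \emph{four} terms on the right (degrees $(1,1),(2,1),(3,1),(3,2)$ in the two parameters), so when I substitute and conjugate, I must carefully track which of the resulting root unipotents has an argument divisible by something in $AB$ and can be dropped, and which must be carried and later re-identified as a copy of $y_{\g}$. The shortcut promised at the end of \S~1 — that pairs of short roots need no separate treatment — means I should organise the substitution so that the ``surviving'' commutator involves the short root $\b$ paired against $-\b$, and then only Lemma~4/Corollary~2 (the $\A_2$ case among the long roots, which \emph{do} form an $\A_2$ in $\G_2$) plus Theorem~3 are needed to finish; I would not want to invoke any extra $\G_2$-specific structure constant beyond those displayed in \S~1, since the paper explicitly says the {\tt g2.nb} computation was eliminated. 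Once the lemma is proved, combining it with Lemma~4 and Corollary~2 immediately yields both items of Theorem~4 for $\G_2$ (with $p=3$) and, together with Theorem~2, completes the proof of Theorem~1 in the last remaining case.
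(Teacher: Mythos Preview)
Your proposal has the right overall template but several concrete missteps that would prevent the argument from going through.

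First, you set out to expand the \emph{long} commutator $y_{\g}(ac,b)$, whereas the statement concerns the \emph{short} commutator $y_{\a}(ac,b)$; the paper, following Lemmas~5 and~7, expands $y_{\a}(ac,b)=x_{\a}(ac)\cdot{}^{x_{-\a}(b)}x_{\a}(-ac)$ and substitutes for $x_{\a}(-ac)$, not for $x_{\g}(-ac)$. Relatedly, your claim that $\G_2$ contains a subsystem of type $\C_2$ is false: the only rank~$2$ closed subsystem of $\G_2$ is the $\A_2$ spanned by the long roots. What actually happens is that one writes $\a=\b+\g$ with $\b$ short and $\g$ long at angle $\pi/6$, and uses the $\G_2$ Chevalley formula for $[x_{\b}(-c),x_{\g}(a)]$ to express $x_{\a}(-ac)$ as $u\cdot[x_{\b}(-c),x_{\g}(a)]$ with $u\in\langle X_{\a+\b},X_{\a+2\b},X_{2\a+\b}\rangle$.

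Second, your prediction that the surviving commutator is of the form $[x_{\b}(\ast),x_{-\b}(\ast)]$ (short) is the wrong endpoint: after conjugating by $x_{-\a}(b)$ and discarding the factors with parameters in $AB$, what remains is $[x_{-\g}(-3cb)x_{\b}(-c),\,x_{\g}(a)]$, and cancelling $x_{\a}(ac)\cdot u$ against $[x_{\b}(-c),x_{\g}(a)]$ leaves precisely the \emph{long} commutator $y_{-\g}(-3cb,a)$. The factor~$3$ does not arise from ``three long root unipotents'' (indeed $2\a+\b$ is short, not long), but from a single structure constant: conjugating $x_{\b}(-c)$ by $x_{-\a}(b)$ produces $x_{-\g}(-3cb)$ because the relevant $N_{\a\b ij}$ equals~$3$. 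One then invokes Theorem~3 to rewrite $y_{-\g}(-3cb,a)\equiv y_{-\g}(cb,a)^{-3}\equiv y_{\g}(a,cb)^3$. No recombination of three separate $y_{\g}$'s via additivity is needed, and no comparison of two short commutators is required at the end; Lemma~4 on the long $\A_2$ is only used afterwards to pass to an arbitrary pair $(\a,\g)$.
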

\begin{proof}
First, assume that $\a$ and $\g$ form an angle $\pi/6$. 
We choose a short root $\b$ such that $\a=\b+\g$ and 
specify the same choice of signs as in Section 1.3.
\par
Expanding the elementary 
commutator $ y_{\a}(ac,b)$ as in Lemma 4 and plugging in 
$x_{\a}(-ac)=u\cdot [x_{\b}(-c),x_{\g}(a)]$, where 
$u=x_{\a+\b}(-ac^2)x_{\a+2\b}(ac^3)x_{2\a+\b}(2a^2c^3)$,
we get
$$  y_{\a}(ac,b)=x_{\a}(ac)\cdot{}^{x_{-\a}(b)}x_{\a}(-ac)=
x_{\a}(ac)\cdot {}^{x_{-\a}(b)}u
\cdot {}^{x_{-\a}(b)}\big[x_{\b}(-c),x_{\g}(a)\big]. $$
\noindent
Clearly, ${}^{x_{-\a}(b)}u\equiv u\pamod{E(\Phi,R,AB)}$.

Expanding the conjugation by $x_{-\a}(b)$, we see that 
$y_{\a}(ac,b)= x_{\a}(ac)\cdot {}^{x_{-\a}(b)}u\cdot v$,
where
$$ v=
\big[x_{-\g}(-3cb)x_{\b}(-c),x_{\g}(a)x_{-\a-2\b}(-a^2b^3)x_{-2\a-\b}(ab^3)x_{-\a-\b}(ab^2)
x_{-\b}(ab)
\big]. $$
\noindent
Clearly, the last four factors of the second argument in this last 
commutator already be\-long to the group $E(\Phi,AB)$ which 
is contained in $E(\Phi,R,AB)$. 

\par
Thus, by the same token, as above,
$$  y_{\a}(ac,b)\equiv  x_{\a}(ac)\cdot u
\cdot\big[x_{-\g}(-3cb)x_{\b}(-c),x_{\g}(a)\big] \pamod{E(\Phi,R,AB)}. $$
\noindent
Using multiplicativity of the commutator w.~r.~t.\ first argument, cancelling 
the first commutator of the resulting expression, we see that
$$
y_{\a}(ac,b)\equiv y_{-\g}(-3cb,a)\equiv y_{-\g}(cb,a)^{-3}
\equiv y_{\g}(a,cb)^3
\pamod{E(\Phi,R,AB)}. $$
\par
Obviously, combined with Lemma 4 this gives necessary 
inlcusions for all pairs of a short and a long root.
\end{proof}

\begin{corollary}
Assume that the roots $\a,\b\in\G_2$. Then
for all $a\in A$, $b\in B$, $c\in R$, one has\/{\rm:}
$$ y_{\a}(ac,b)\equiv y_{\b}(a,cb)\pamod{E(\Phi,R,AB)}. $$
\end{corollary}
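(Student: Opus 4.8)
The plan is to deduce the corollary, for a pair of roots $\a,\b\in\G_2$ of equal length, from the two computational inputs already in place: the Corollary to Lemma~4 for long roots, and Lemma~8 for the short/long passage. So I would argue by a case split on the common length of $\a$ and $\b$.

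If $\a$ and $\b$ are both \emph{long}, there is nothing new to do. The long roots of $\G_2$ form a genuine subsystem of type $\A_2$ — the relevant instance of the Chevalley commutator formula, $[x_{\b}(a),x_{3\a+\b}(b)]=x_{3\a+2\b}(ab)$, has no protruding short-root factors — so the fourth bullet of the Corollary to Lemma~4 applies verbatim and yields $y_{\a}(ac,b)\equiv y_{\b}(a,cb)\pamod{E(\Phi,R,AB)}$.

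If $\a$ and $\b$ are both \emph{short}, I would detour through a long root, since, in contrast to the long roots, the short roots of $\G_2$ do \emph{not} form a clean $\A_2$: commutators of short roots breed extra long-root factors, e.g.\ $[x_{\a}(a),x_{2\a+\b}(b)]=x_{3\a+\b}(3ab)$, so Lemma~4 is not directly available. Fix a long root $\g$. Lemma~8 applied to the short root $\a$ together with $\g$ gives $y_{\a}(ac,b)\equiv y_{\g}(a,cb)^{3}$; applied to the short root $\b$ together with $\g$, but feeding in $cb$ for the second entry and $1$ for the scalar, it gives $y_{\b}(a,cb)=y_{\b}(a\cdot 1,cb)\equiv y_{\g}(a,cb)^{3}$. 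Comparing the two congruences yields $y_{\a}(ac,b)\equiv y_{\b}(a,cb)\pamod{E(\Phi,R,AB)}$. (Should one wish to route $\a$ and $\b$ through \emph{different} long roots — as the angle hypothesis in the proof of Lemma~8 suggests — the two resulting cubes are identified by the long-root case above, i.e.\ by the Corollary to Lemma~4 together with the symbol identities of Theorem~3.)

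I do not anticipate a genuine obstacle: the computational weight of the $\G_2$ case — in particular the appearance of the structure constant $3$ — has already been spent in the proof of Lemma~8, and what remains is only the bookkeeping of legitimate substitutions (the $c\mapsto 1$ specialisation) and of the additivity of Theorem~3. The one point to keep straight is that the clean congruence of the corollary is exactly the statement for roots of the \emph{same} length; the cross-length comparison is the cube $y_{\a}(ac,b)\equiv y_{\g}(a,cb)^{3}$ of Lemma~8, matching the exponent $p=3$ in Theorem~4, and is not sharpened here.
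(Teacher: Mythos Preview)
Your proposal is correct and essentially matches the paper's argument. The paper's proof (which treats only the short case, the long case having been handled already by the Corollary to Lemma~4) writes the chain $y_{\a}(ac,b)\equiv y_{\g}(ac,b)^3\equiv y_{\g}(a,cb)^3\equiv y_{\b}(a,cb)$, i.e.\ it uses Lemma~8 with $c=1$ twice and moves the $c$ inside the long-root cube via Lemma~4; you instead move the $c$ already in the first application of Lemma~8 and then use Lemma~8 with $c=1$ once, which is a harmless and slightly more economical variant of the same idea.
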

\begin{proof} 
Indeed, let $\g$ be any long root. Then by the previous lemma
and Lemma 4 one has
$$ y_{\a}(ac,b)\equiv y_{\g}(ac,b)^3\equiv
y_{\g}(a,cb)^3\equiv y_{\b}(a,cb)\pamod{E(\Phi,R,AB)}. $$
\end{proof}
This completes the proof of Theorems 4 and 5 for the only remaining
case $\Phi=\G_2$, and thus also the proof of Theorem 1, for
all cases.

%%%%%%%%%%%%%%%%%%%%%%%%%%%%%%%%%

\section{Final remarks}\label{sec:final}

Theorem 1 implies surjective stability for the abelian quotients
$$ \big[E(\Phi,A),E(\Phi,B)\big]/E(\Phi,R,AB) $$ 
\noindent
described in Theorem 2,
without any stability conditions. This is a generalisation of the first 
half of \cite{RNZ4}, Lemma 15, to all Chevalley groups. 
Indeed, in view of Theorems 1 and 2 as a normal subgroup of 
$E(\Phi,R)$ the group $[E(\Phi,A),E(\Phi,B)]$ is generated by a
similar commutator for a rank 2 subsystem. This can be restated 
as follows.

\begin{The}
Let $R$ be any commutative ring with $1$, and let $A$ 
and $B$ be two sided ideals of $R$. Further, assume that $\Delta\le\Phi$
is a root subsystem containing $\A_2$ on long roots or $\C_2$. Then
the stability map
$$ \big[E(\Delta,A),E(\Delta,B)\big]/E(\Delta,R,AB) \map
\big[E(\Phi,A),E(\Phi,B)\big]/E(\Phi,R,AB) $$
\noindent
is surjective.
\end{The}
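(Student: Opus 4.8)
The plan is to derive this directly from Theorem~\ref{The:1}, with essentially no extra work: all the substance is already contained in Theorems~\ref{The:1} and~\ref{The:2}, and the present statement is a formal corollary. First I would record that the stability map is well posed. Since $\Delta\subseteq\Phi$, every elementary generator $x_{\a}(\xi)$ entering $E(\Delta,A)$, $E(\Delta,B)$, $E(\Delta,AB)$ or $E(\Delta,R)$ is also one of the generators of $E(\Phi,A)$, $E(\Phi,B)$, $E(\Phi,AB)$, $E(\Phi,R)$ respectively; hence $[E(\Delta,A),E(\Delta,B)]\le[E(\Phi,A),E(\Phi,B)]$, and the $E(\Delta,R)$-normal closure $E(\Delta,R,AB)$ of $E(\Delta,AB)$ is contained in the $E(\Phi,R)$-normal closure $E(\Phi,R,AB)$ of $E(\Phi,AB)$. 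Using Lemma~\ref{Lem:6} (for $\Phi$, and likewise inside $\Delta$), the level-$AB$ relative elementary subgroups lie in the corresponding mixed commutators, so both quotients make sense, and the inclusion $[E(\Delta,A),E(\Delta,B)]\hookrightarrow[E(\Phi,A),E(\Phi,B)]$ induces the stability map between them.

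The heart of the matter is the single identity
$$ [E(\Phi,A),E(\Phi,B)]=E(\Phi,R,AB)\cdot[E(\Delta,A),E(\Delta,B)]. $$
Once this is known, surjectivity is immediate: the right-hand side is a subgroup, being the product of the normal subgroup $E(\Phi,R,AB)\unlhd E(\Phi,R)$ with the subgroup $[E(\Delta,A),E(\Delta,B)]\le E(\Phi,R)$; and any $g\in[E(\Phi,A),E(\Phi,B)]$ may then be written $g=n h$ with $n\in E(\Phi,R,AB)$, $h\in[E(\Delta,A),E(\Delta,B)]$, so that the class of $g$ in $[E(\Phi,A),E(\Phi,B)]/E(\Phi,R,AB)$ equals that of $h$, which is the image under the stability map of the class of $h$ in $[E(\Delta,A),E(\Delta,B)]/E(\Delta,R,AB)$.

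To establish the identity I would invoke Theorem~\ref{The:1}. By Theorem~\ref{Oldie:2}, $[E(\Phi,A),E(\Phi,B)]=[E(\Phi,R,A),E(\Phi,R,B)]$, and Theorem~\ref{The:1} presents the latter as generated \emph{as a group} by the Stein---Tits---Vaserstein elements $z_{\a}(ab,c)$, $\a\in\Phi$, together with the elementary commutators $y_{\b}(a,b)$ for \emph{one fixed long root} $\b$. The freedom in the choice of $\b$ is exactly what I would use: take $\b$ inside $\Delta$. Such a $\b$ exists because $\Delta$ always contains a long root of $\Phi$ --- if $\Delta$ contains an $\A_2$ on long roots this is obvious, and if $\Delta$ contains a $\C_2$, then each long root $\lambda$ of that $\C_2$-subsystem has the form $\lambda=\mu+\nu$ with $\mu\perp\nu$ short roots of the $\C_2$ of equal length, whence $|\lambda|^2=2|\mu|^2$; in a reduced root system this length ratio is possible only when $\lambda$ is long, so the long roots of the $\C_2$ lie in $\Phi_l$. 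Fixing such a $\b$: each $z_{\a}(ab,c)$, with $ab\in AB$, is a Stein---Tits---Vaserstein generator of $E(\Phi,R,AB)$ (Lemma~1), hence lies in $E(\Phi,R,AB)$; and $y_{\b}(a,b)=[x_{\b}(a),x_{-\b}(b)]$ lies in $[E(\Delta,A),E(\Delta,B)]$ since $\pm\b\in\Delta$. Thus every generator furnished by Theorem~\ref{The:1} lies in $E(\Phi,R,AB)\cdot[E(\Delta,A),E(\Delta,B)]$, while the reverse inclusion is trivial, and the identity follows.

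There is no serious obstacle here; the only points demanding attention are administrative --- the well-posedness of the stability map, and the small root-system observation that the long roots of a $\C_2$-subsystem of $\Phi$ are long in $\Phi$, which is precisely what permits the distinguished long root of Theorem~\ref{The:1} to be chosen in $\Delta$. If one prefers not to exploit the freedom in that choice, one can instead appeal to Theorem~\ref{The:4} to replace $y_{\b}(a,b)$ by $y_{\b'}(a,b)$ for a long root $\b'\in\Delta$, but this is not needed. (As elsewhere in the paper, all of this rests on the standing assumption~(*), on which Theorems~\ref{The:1}, \ref{The:2} and Lemma~\ref{Lem:6} rely.)
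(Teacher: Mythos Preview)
Your proof is correct and follows essentially the same route as the paper: invoke Theorem~\ref{The:1} so that modulo $E(\Phi,R,AB)$ the mixed commutator is generated by the $y_{\b}(a,b)$ for a single long root $\b$, and then choose this $\b$ inside $\Delta$. The paper's justification is a single sentence to this effect; your write-up supplies the details (well-definedness of the map, and the observation that $\Delta$ necessarily contains a long root of $\Phi$) that the paper leaves implicit.
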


According to Theorem 4 modulo $E(\Phi,R,AB)$ the elementary
commutators $y_{\a}(a,b)$ behave as symbols. Theorems 3 and 5 
list some relations satisfied by these symbols. However, looking at 
the examples for which $[E(\Phi,A),E(\Phi,B)]$ was explicitly 
calculated, such as Dedekind rings of arithmetic type, \cite{MAS3,
MAS1, NV19}, it is easy to see that there must be further relations.

\begin{Prob} Give a presentation of
$\big[E(\Phi,A),E(\Phi,B)\big]/E(\Phi,R,AB)$
by generators and relations.
\end{Prob}

In the present paper we have generalised the main results 
of \cite{NZ1} to all Chevalley groups. It is natural to ask,
whether the same can be done also for the results of \cite{NZ3, NZ6}.
For the results of \cite{NZ3} this does not have much sense, since 
for {\it commutative\/} rings they already follow from the birelative
standard  commutator formula, and are already contained in 
\cite{RNZ2, Stepanov_universal, RNZ4}. The fact that they can be
proven by elementary calculations alone, without any use of 
localisation methods, is amusing, but does not have any tangible
implications.
\par
However, the analogues of results of \cite{NZ6} would be markedly 
new, and would have vital consequences. It is not even totally clear,
whether the triple congruences for subgroups of $\GL(n,R)$, such 
as established in \cite{NZ6}, Theorem 1, hold in this form in more 
general contexts, or should be replaced by fancier and longer ones.

\begin{Prob}
Prove analogues of \cite{NZ6}, Theorem\/ $1$, for Chevalley groups.
\end{Prob}

The partially relativised group $E(\Phi,B,A)={E(\Phi,A)}^{E(\Phi,B)}$
is the smallest $E(\Phi,B)$-normalised subgroup containing 
$E(\Phi,A)$. It is easy to derive from Theorem 1 that $E(\Phi,B,A)$ 
is generated by the elementary conjugates
$z_{\alpha}(a,b)={}^{x_{-\alpha}(b)}x_{\alpha}(a)$,
where $\alpha\in\Phi$, $a\in A$, $b\in B$. It is natural to ask,
whether this result can be improved further. Namely, can one 
limit the roots $\a$ here to roots in the special part of some
parabolic set of roots, as was done for $E(\Phi,R,A)$ by
van der Kallen and Stepanov, see \cite{vdK-group, Stepanov_calculus,
Stepanov_nonabelian}. 

\begin{Prob}
Prove that $E(\Phi,B,A)$ is generated by $E(\Phi,R)$ together with
the elementary conjugates $z_{\alpha}(a,b)={}^{x_{-\alpha}(b)}x_{\alpha}(a)$,
where  $a\in A$, $b\in B$, while $\a$ runs over the special part
of a fixed parabolic set of roots in $\Phi$. 
\end{Prob}

We are very grateful to %% Victor Petrov, Anastasia Stavrova and 
Roozbeh Hazrat and Alexei Stepanov for %%numerous 
extremely useful discussions at various stages of this work. Also, we
very much appreciate the help by Alexander Luzgarev who has sent 
us his neat {\tt Mathematica} package {\tt g2.nb}. Among other 
things, that package allowed us to generate explicit matrix form 
of root unipotents and Chevalley commutator formulae for the 
Chevalley group of type $\G_2$ in the adjoint representation, which
was crucial in getting the initial proof of Theorems 4 and 5 in this
case. Finally, we 
thank Anastasia Stavrova for her very pertinent questions during 
our seminar talk, and insistence.

%%, who observed that
%%Theorems 1 and 2 are indeed equivalent, and to prove them one 
%%can proceed either way. In {NV18}  in the case of $\GL(n,R)$
%%the second author derived Theorem 1 from Theorem 2, here we
%%establish these results in the opposite direction.

%%%%%%%%%%%%%%%%%%%%%%%%%%%%%%%%%%%%%%%%%%%%%%%%%%%%%%%%%%%%%%%%%%

\end{document}